\documentclass[11pt]{article}
\usepackage{lmodern}
\usepackage{amsmath,amssymb,amsthm}
\usepackage[margin=1in]{geometry}
\usepackage{url}
\usepackage{booktabs}
\usepackage[hidelinks]{hyperref}
\newtheorem{theorem}{Theorem}
\newtheorem{lemma}[theorem]{Lemma}
\newtheorem{corollary}[theorem]{Corollary}
\newtheorem{proposition}[theorem]{Proposition}

\title{A finite cover for coefficient positivity of stretched
Littlewood--Richardson polynomials in the seven-row, size-thirty box}
\author{Maseeh Ghodsi\\Independent Researcher\\\texttt{maseeh.ghodsi@gmail.com}}
\date{9 September 2026}
\hypersetup{
  pdftitle={A finite cover for coefficient positivity of stretched Littlewood--Richardson polynomials in the seven-row, size-thirty box},
  pdfauthor={Maseeh Ghodsi},
  pdfsubject={Computer-assisted coefficient positivity for the seven-row, size-thirty box},
  pdfkeywords={Littlewood-Richardson coefficients, Ehrhart polynomials, finite enumeration}
}
\begin{document}
\maketitle
\begingroup
\renewcommand{\thefootnote}{}
\footnotetext{\copyright\ 2026 Maseeh Ghodsi, for
rights held in original contributions. This paper is licensed under
Creative Commons Attribution 4.0 International (CC BY 4.0):
\url{https://creativecommons.org/licenses/by/4.0/}.}
\endgroup
\begin{abstract}
We give a finite cover argument for nonnegativity of the ordinary monomial
coefficients of every stretched Littlewood--Richardson polynomial with
partition lengths at most seven and outer size at most thirty. Explicit
reductions leave 358,952 residual triples. The computational part consists
of exact finite enumeration, local reduction checks and rational Ehrhart
polynomial computations. Its three named dependencies are stated precisely
below, separately from the mathematical implication they establish.
\end{abstract}
\section{Statement and mathematical inputs}
Partitions are finite weakly decreasing sequences of positive integers,
with zero padding allowed. A triple is balanced if
$|\lambda|=|\mu|+|\nu|$; the outer partition is $\lambda$.
The box in the Epoch problem~\cite{Epoch} consists of balanced triples
with all lengths at most seven and outer size at most thirty. Balance implies the other size bounds.
\textbf{Standing convention.} Every triple in a polynomial identity below
is a balanced triple of partitions; every displayed part beyond a partition's
length is zero. Subtracting parts is permitted only when the displayed
hypotheses give nonnegative weakly decreasing sequences, after which trailing
zeros are removed. A rank is an integer at least the three lengths.

Write $P_T(t)=P^\lambda_{\mu\nu}(t)$ for the unique polynomial over
$\mathbb Q$ satisfying
\[
 \forall t\in\mathbb Z_{\ge1},\qquad P_T(t)=c^{t\lambda}_{t\mu,t\nu}.
\]
For every balanced $T=(\lambda,\mu,\nu)$ with
$\max(\ell(\lambda),\ell(\mu),\ell(\nu))\le7$ and $|\lambda|\le30$,
our target is
\[
 \forall i\in\mathbb Z_{\ge0},\qquad [t^i]P_T(t)\ge0.
\]
Here $[t^i]$ always means the coefficient in $1,t,t^2,\ldots$; coefficients
past the degree are zero. The theorem quantifies over every positive
stretch, including stretches whose resulting partitions exceed the box.
The zero polynomial is characterized by its positive-integer values;
when $c^\lambda_{\mu\nu}=0$, its value at zero is $0$, even though the
all-empty triple has LR coefficient $1$.

We use the Littlewood--Richardson tableau and representation rules, and
exchange of the inner partitions. Polynomiality and the bound
$\deg P_T\le \binom{n-1}{2}$ for rank $n\ge2$ follow from
Rassart~\cite[Corollary 4.2]{Rassart}, following Derksen--Weyman.
For ranks zero and one the polynomial is constant.
The following established implications hold for all balanced triples:
\[
\begin{array}{c|ccc}
c^\lambda_{\mu\nu}&0&1&2\\ \hline
P_T(t)&0&1&t+1.
\end{array}
\]
The first is the contrapositive of saturation~\cite{KT}; the second is
Fulton's theorem~\cite[Section 6.1]{KTW}; the third is
Ikenmeyer's theorem~\cite[Theorem 1.1]{Ikenmeyer}.
For positive base multiplicity, $P_T(0)=1$ by the Ehrhart interpretation.
No positivity conjecture or earlier computational sweep is an input.
The tensor argument below also uses the standard complete reducibility
of finite-dimensional rational $GL_n(\mathbb C)$ representations,
duality of tensor products and the tensor--Hom adjunction; see
Milne~\cite[17.14 and 20.35]{Milne} for the complete reducibility input.

For $0\le B\le30$, let $\mathcal B_B$ mean nonnegativity for
all balanced triples in the box with outer size at most $B$.
Only $\mathcal B_0$ is needed: the all-empty triple has polynomial $1$.
Agreement of polynomials at all positive integers implies equality,
since a nonzero polynomial has at most its degree many roots.

\paragraph{Computational convention.}
This is a conventional computer-assisted proof: its finite assertions are
established by the complete exact computation accompanying the article.
The exact arithmetic implementations and their execution are trusted,
as specified below. The mathematical identities and finite coverage
argument are proved here; no completed proof-assistant verification is asserted.

\section{Tableau arrays and shortening}

\begin{lemma}[Array description]\label{lem:arrays}
Suppose \(\mu\subseteq\lambda\) and all partitions have length at most \(n\). At stretch \(t\ge1\), LR tableaux correspond to nonnegative integer arrays \(x_{j,i}\), \(1\le j,i\le n\), satisfying
\begin{align}
\sum_i x_{j,i}&=t(\lambda_j-\mu_j),
&
\sum_j x_{j,i}&=t\nu_i,                                      \label{eq:margins}\\
x_{j,i}&=0\quad(i>j),                                        \label{eq:support}\\
\sum_{h<j}x_{h,i}-\sum_{h\le j}x_{h,i+1}&\ge0
\quad(1\le i<n),                                            \label{eq:lattice}\\
t(\mu_{j-1}-\mu_j)+\sum_{i<b}x_{j-1,i}
-\sum_{i\le b}x_{j,i}&\ge0
\quad(2\le j\le n,\ 1\le b\le n).                            \label{eq:columns}
\end{align}
Define
\[
q_k=\sum_{j\le k}(\mu_j+\nu_j-\lambda_j).
\]
The number of labels at most \(k\) below the first \(k\) rows is \(tq_k\). Existence of a tableau therefore implies \(q_k\ge0\). It also implies
\[
\lambda_j-\mu_j\le\nu_1.
                                                               \tag{5}
\]
\end{lemma}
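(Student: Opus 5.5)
We read the array \(x_{j,i}\) as the number of entries equal to \(i\) in row \(j\) of an LR tableau of shape \(t\lambda/t\mu\) and content \(t\nu\). The plan is to check that the standard LR conditions on such a filling translate exactly into \eqref{eq:margins}--\eqref{eq:columns}, then derive the two consequences by counting. Given the counts, the filling is determined: row \(j\) must be weakly increasing, so it is the word \(1^{x_{j,1}}2^{x_{j,2}}\cdots\) placed in columns \(t\mu_j+1,\dots,t\lambda_j\). The correspondence is therefore injective, and the work is to identify which arrays occur.

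\textbf{Margins and support.} The row sums are \(t(\lambda_j-\mu_j)\) because row \(j\) of \(t\lambda/t\mu\) has that many boxes. The column sums are \(t\nu_i\) because the content is \(t\nu\). For the support condition \eqref{eq:support}, we use the standard fact that in an LR tableau no entry in row \(j\) exceeds \(j\). This follows by induction from the lattice-word condition combined with column strictness: an entry \(i\) in row \(j\) sits below entries in the same column that are \(<i\), all lying in rows \(<j\). We also note that the reverse reading word (right to left, top to bottom) is lattice if and only if it is lattice at the end of each block of equal letters, and such block ends occur only within rows.

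\textbf{Lattice condition.} When we finish reading the \(i+1\)'s of row \(j\), the count of \(i+1\)'s read so far is \(\sum_{h\le j}x_{h,i+1}\). In reversed reading order, the \(i\)'s of row \(j\) come after its \(i+1\)'s, so the count of \(i\)'s read so far is \(\sum_{h<j}x_{h,i}\). Requiring the first count to be at most the second at every such moment gives \eqref{eq:lattice}. This checkpoint is the binding one, since the prefix is worst exactly there.

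\textbf{Column strictness.} We need every entry of row \(j\) to be strictly larger than the entry directly above it. Equivalently, for each \(b\), the entries \(\le b\) in row \(j\) must sit strictly to the right of the columns holding entries \(\le b-1\) in row \(j-1\), and they must not overlap the columns of row \(j-1\) lying inside \(t\mu_{j-1}\) beyond \(t\mu_j\). In positions this reads: column \(t\mu_j+\sum_{i\le b}x_{j,i}\) of row \(j\) is at most column \(t\mu_{j-1}+\sum_{i<b}x_{j-1,i}\). Subtracting gives \eqref{eq:columns}. We must verify that this family of inequalities over all \(b\) is equivalent to column strictness. This is the main bookkeeping obstacle; we would handle it by noting that the strictness failure at a box with entry \(b\) is witnessed precisely by the inequality for that \(b\). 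Conversely, any array satisfying \eqref{eq:margins}--\eqref{eq:columns} builds a valid tableau.

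\textbf{Consequences.} Labels \(\le k\) number \(t\sum_{j\le k}\nu_j\) in total. By \eqref{eq:support}, rows \(1,\dots,k\) contain only labels \(\le k\), so they hold exactly \(t\sum_{j\le k}(\lambda_j-\mu_j)\) of them. The difference is \(tq_k\), which counts the labels \(\le k\) below row \(k\) and is therefore nonnegative. For (5), all entries in a single row of a semistandard filling that equal \(1\) must lie in the first row. More directly, the lattice word condition forces each row's length to be at most \(t\nu_1\): row \(j\) has at most one of each \(\dots\). We would instead argue that each column of a skew tableau contains at most one label \(1\) and, by lattice and strictness, each label class \(i\) occupies distinct columns. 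Hence \(t(\lambda_j-\mu_j)\le t\cdot\#\{\text{columns}\}\) is not quite enough. The clean route: the entries in row \(j\) are distinct columns, and the boxes strictly above them in row \(1\) of the column chain carry \(1\)'s, or else the column begins in row \(r\) with entry \(r\)-bounded. The number of \(1\)'s is \(t\nu_1\) and every column meets at most one \(1\), so the count of columns of \(t\lambda/t\mu\) that meet row \(j\) is at most \(t\nu_1\) once we show that each such column contains a \(1\) via the lattice condition. This last step is where we would spend care, probably via the known bound \(\lambda_j-\mu_j\le\nu_1\) from Horn-type inequalities at \(t=1\) scaled.
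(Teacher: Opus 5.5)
The bijection, the lattice checkpoints, the displayed column inequality and the count $tq_k$ all match the paper's proof. Two smaller points are off, though.

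Your verbal gloss of column strictness has the direction reversed. The entries $\le b$ of row $j$ must end weakly \emph{left} of where the inner cells and entries $<b$ of row $j-1$ end. Your displayed inequality is the correct one.

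Your justification of the support condition is also wrong. Column strictness only bounds an entry from below, and in a skew shape the cell above an entry of row $j$ may be an inner cell. The support condition follows from the lattice condition alone. The first letter read in row $j$ is its maximum $i$. If $i>j$, then by induction no earlier row contains $i-1\ge j$, so that prefix already has more $i$'s than $(i-1)$'s.

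The genuine gap is (5), which you never prove. Your ``clean route'' rests on the claim that every column meeting row $j$ contains a $1$, and this is false. Take $\lambda/\mu=(2,1)/(1)$, $\nu=(1,1)$, $t=1$. The unique LR tableau has a $1$ in row $1$, column $2$ and a $2$ in row $2$, column $1$, so column $1$ meets row $2$ without containing a $1$.

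Falling back on ``Horn-type inequalities'' is a hedge rather than a step. Citing Weyl's inequality $\lambda_{a+b-1}\le\mu_a+\nu_b$ for the nonzero stretched coefficient would be a legitimate external input, but you do not commit to it.

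The missing idea is already in your lattice inequalities: fix $j$ and sum them over $i=1,\ldots,n-1$. With $S_{j,i}=\sum_{h\le j}x_{h,i}$, the sum telescopes to
\[
\sum_{i=2}^{n}x_{j,i}\le S_{j-1,1}-S_{j-1,n}\le S_{j-1,1}.
\]
Adding $x_{j,1}$ gives $t(\lambda_j-\mu_j)\le S_{j,1}\le t\nu_1$. For $n=1$ the content equation gives the bound directly. This is exactly the paper's argument.
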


\begin{proof}
The array records each row's label counts; weak increase determines the filling uniquely.

For labels \(i,i+1\), their smallest prefix-count difference within row \(j\) occurs after its \(i+1\)'s and before its \(i\)'s are read. This is the left side of \eqref{eq:lattice}. These inequalities are therefore equivalent to the lattice condition.

The first row contains only ones. Inductively, if earlier rows contain only labels at most \(j-1\), an entry \(i>j\) in row \(j\) would be read before any \(i-1\) in that row, and earlier rows contain no \(i-1\). This violates the lattice condition and proves \eqref{eq:support}.

The entries at most \(b\) in row \(j\) end at column
\[
t\mu_j+\sum_{i\le b}x_{j,i}.
\]
The entries less than \(b\) in row \(j-1\), together with its removed inner cells, end at
\[
t\mu_{j-1}+\sum_{i<b}x_{j-1,i}.
\]
Column strictness is equivalent to the first endpoint not exceeding the second for each \(b\). This gives \eqref{eq:columns}.

All cells in the first \(k\) rows have labels at most \(k\). Subtracting their number from the total content of such labels gives \(tq_k\).

For \(n>1\), summing \eqref{eq:lattice} over \(i\) gives
\[
\sum_{i=2}^n x_{j,i}
\le \sum_{h<j}x_{h,1}-\sum_{h<j}x_{h,n}.
\]
Adding \(x_{j,1}\) bounds the row length by the total number \(t\nu_1\) of ones. For \(n=1\), the content equation gives the same bound.
\end{proof}

\begin{theorem}[Strengthened shortening]\label{thm:shortening}
Let \(\mu\subseteq\lambda\), with all partitions of length at most \(n\). Fix \(1\le k<n\), suppose \(q_k\ge0\), and put
\[
r_{k+1}=\lambda_{k+1}-\mu_{k+1},\qquad
b_k=\min(q_k,r_{k+1}).
\]
If an integer \(a\ge0\) satisfies
\[
a\le\lambda_k-\lambda_{k+1},\qquad
a\le\mu_k-\mu_{k+1}-b_k,
                                                               \tag{6}
\]
define
\[
\lambda^-_j=\lambda_j-a\,1_{\{j\le k\}},\qquad
\mu^-_j=\mu_j-a\,1_{\{j\le k\}}.
\]
These are partitions after zero parts are dropped, and
\[
P^\lambda_{\mu\nu}=P^{\lambda^-}_{\mu^-,\nu}.
                                                               \tag{7}
\]
The analogous exchanged-inner identity holds.
\end{theorem}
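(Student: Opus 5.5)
The plan is to prove (7) at each stretch $t\ge1$ by a bijection between the array sets of Lemma~\ref{lem:arrays} for $(\lambda,\mu,\nu)$ and for $(\lambda^-,\mu^-,\nu)$. Equality of the two polynomials then follows from agreement at all positive integers.

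\textbf{Partitions.} First we check that $\lambda^-$ and $\mu^-$ are partitions. Weak decrease can fail only at the boundary $k,k+1$. There it holds because $a\le\lambda_k-\lambda_{k+1}$, and because $a\le\mu_k-\mu_{k+1}-b_k\le\mu_k-\mu_{k+1}$ since $b_k\ge0$. Nonnegativity of the first $k$ parts follows from the same inequalities. We also note that $\mu^-\subseteq\lambda^-$, that $\lambda^-_j-\mu^-_j=\lambda_j-\mu_j$ for every $j$, and that the new triple is still balanced. Its $q_j$ values are unchanged: for $j\le k$ the shifts $-a$ in $\mu_j$ and $\lambda_j$ cancel, and they cancel trivially for $j>k$.

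\textbf{Comparing the constraints.} We use the identity map on arrays $x$. Since $\lambda-\mu$ and $\nu$ are unchanged, \eqref{eq:margins}, \eqref{eq:support} and \eqref{eq:lattice} are literally the same constraints for both triples. In \eqref{eq:columns} only the constant $t(\mu_{j-1}-\mu_j)$ depends on $\mu$, and it changes only for $j=k+1$, where it decreases by $ta$. Hence every array for the shortened triple is an array for the original one. The whole content of the theorem is the converse: for every array valid for the original triple,
\[
t(\mu_k-\mu_{k+1}-a)+\sum_{i<b}x_{k,i}-\sum_{i\le b}x_{k+1,i}\ge0\qquad(1\le b\le n).
\]
By (6) it suffices to prove $\sum_{i\le b}x_{k+1,i}-\sum_{i<b}x_{k,i}\le tb_k$ for every $b$, using only the non-column constraints.

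\textbf{The main obstacle: two bounds.} We need two separate bounds, one for each term of $b_k=\min(q_k,r_{k+1})$.

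The bound by $tr_{k+1}$ is immediate: $\sum_{i\le b}x_{k+1,i}\le t(\lambda_{k+1}-\mu_{k+1})$ by the row margin, and the subtracted term is nonnegative.

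The bound by $tq_k$ needs real work. By \eqref{eq:support}, $x_{j,i}=0$ for $i>j$, so the entries $x_{k+1,i}$ with $i\le k$ are exactly labels at most $k$ lying below the first $k$ rows. By Lemma~\ref{lem:arrays} these number at most $tq_k$ in total. The only problematic index is $i=k+1$, which arises when $b=k+1$ or larger. In that case I would use the lattice inequality \eqref{eq:lattice} with $i=k$ and $j=k+1$:
\[
x_{k+1,k+1}\le\sum_{h\le k}x_{h,k}-\sum_{h\le k}x_{h,k+1}=\sum_{h\le k}x_{h,k}.
\]
The last equality holds because $x_{h,k+1}=0$ for $h\le k$. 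Then I would combine this with the subtracted term $\sum_{i<b}x_{k,i}$, which contains $x_{k,k}$, and with a telescoping count of labels $k$ in rows above $k$. The aim is to absorb the $(k+1)$-entries into labels-$\le k$ content counted by $q_k$. I expect this bookkeeping to need care: one may have to show instead that the left side is maximized at $b=k$, where the bound by $tq_k$ is clean, using the observation that increasing $b$ past $k$ adds $x_{k+1,b}$ but subtracts $x_{k,b-1}$, which is controlled by the lattice condition.

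\textbf{Exchanged-inner identity.} The analogous identity follows by applying the result to $(\lambda,\nu,\mu)$ and using $c^\lambda_{\mu\nu}=c^\lambda_{\nu\mu}$ at every stretch.
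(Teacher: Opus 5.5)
Your overall strategy matches the paper's proof: the identity map on arrays at each stretch, only the column inequalities with $j=k+1$ changing (their constant drops by $ta$), and the bound of $\sum_{i\le b}x_{k+1,i}$ by both $tq_k$ and $tr_{k+1}$, which settles $b\le k$. You diverge at $b\ge k+1$, and there your write-up stops short of a proof.

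The paper handles $b\ge k+1$ in one line using the \emph{first} inequality of (6), which you used only for partition-ness. For $b\ge k+1$, triangular support makes $\sum_{i<b}x_{k,i}$ and $\sum_{i\le b}x_{k+1,i}$ the complete row sums $t(\lambda_k-\mu_k)$ and $t(\lambda_{k+1}-\mu_{k+1})$. The modified slack is therefore exactly $t(\lambda_k-\lambda_{k+1}-a)\ge0$. Your reduction ``by (6) it suffices to prove the $tb_k$ bound for every $b$'' discarded the hypothesis designed for this case, which is why it looked like the main obstacle.

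Your own route also closes, and no telescoping is needed. Support gives $x_{h,k}=0$ for $h<k$, so your lattice inequality is simply $x_{k+1,k+1}\le x_{k,k}$. Once $b\ge k+1$, $x_{k,k}$ is one of the subtracted terms. Hence at $b=k+1$ the left side is at most $\sum_{i\le k}x_{k+1,i}\le tq_k$; it is also at most $tr_{k+1}$ by the row margin. For $b\ge k+2$ the left side is unchanged, because $x_{k+1,i}=x_{k,i-1}=0$ for $i\ge k+2$. This confirms your guess that the maximum over $b\ge k$ is attained at $b=k$.

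Your route costs a line more than the paper's, but it shows something extra. Whenever an array exists, $\lambda_k-\lambda_{k+1}\ge\mu_k-\mu_{k+1}-b_k$. So for positive multiplicity, the first condition in (6) is implied by the second and is needed only to make $\lambda^-$ a partition. The rest of your proposal is fine: the partition and containment checks, balance, the converse inclusion of arrays, and exchange symmetry for the inner-exchanged identity.
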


\begin{proof}
The bounds ensure nonnegative weakly decreasing modified sequences. Their row-length differences are unchanged.

At stretch \(t\), the margins, support, and lattice inequalities remain identical. Only column inequalities crossing rows \(k,k+1\) change.

For \(b\le k\), the lower partial row sum is bounded by both \(tq_k\) and \(tr_{k+1}\). The modified slack is therefore at least
\[
t(\mu_k-\mu_{k+1}-a-b_k)\ge0.
\]
For \(b\ge k+1\), triangular support makes both partial sums complete row sums. The modified slack is
\[
t(\lambda_k-\lambda_{k+1}-a)\ge0.
\]
Thus every original array is modified-feasible. Conversely, each original crossing slack equals the modified slack plus \(ta\); all other conditions coincide. This proves a bijection at every stretch. Positive base multiplicity was not assumed.
\end{proof}

\section{Deletion and factorization}

\begin{lemma}[Empty-row deletion]\label{lem:emptyrow}
If \(\mu\subseteq\lambda\) and \(\lambda_j=\mu_j\), deleting part \(j\) from both gives
\[
P^\lambda_{\mu\nu}
=
P^{\widehat\lambda}_{\widehat\mu,\nu}.
                                                               \tag{8}
\]
If \(j\le\ell(\lambda)\), the outer size decreases by \(\lambda_j>0\).
\end{lemma}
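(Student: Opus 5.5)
The plan is to realize the deletion as a bijection of arrays at every stretch, using Lemma~\ref{lem:arrays}, in the same way Theorem~\ref{thm:shortening} was proved. At stretch \(t\), row \(j\) of the skew shape \(t\lambda/t\mu\) is empty. The margin equation in \eqref{eq:margins} then forces \(x_{j,i}=0\) for every \(i\). We therefore aim to show that deleting row \(j\) of the array gives a bijection onto the arrays for \((\widehat\lambda,\widehat\mu,\nu)\). That bijection gives equality of LR coefficients at every positive stretch, and hence (8) as polynomials, by agreement at all positive integers.

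It is cleaner to argue with tableaux than with the re-indexed inequalities, since the support condition \eqref{eq:support} shifts indices after deletion. An LR tableau of shape \(t\lambda/t\mu\) and content \(t\nu\) has no cells in row \(j\). Deleting that row yields a filling of \(t\widehat\lambda/t\widehat\mu\) with the same reading word. The content and the lattice (Yamanouchi) condition depend only on the reading word, so both are preserved. Row weak increase is also preserved.

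Column strictness is the only condition that involves row adjacency, and it is the step that needs care. We must show that rows \(j-1\) and \(j+1\), which become adjacent after deletion, satisfy column strictness automatically, and conversely. Every cell of row \(j+1\) lies in a column \(c>t\mu_{j+1}\). Since \(\lambda_j=\mu_j\), we have \(t\lambda_{j+1}\le t\lambda_j=t\mu_j\le t\mu_{j-1}\). Hence any column meeting row \(j+1\) of the skew shape lies within the inner shape in row \(j-1\), so no skew cell of row \(j-1\) sits above a skew cell of row \(j+1\). Consequently the two rows impose no column condition on each other, either before or after deletion.

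The same inequalities show that \(\widehat\lambda\) and \(\widehat\mu\) are partitions, with \(\widehat\mu\subseteq\widehat\lambda\), after trailing zeros are removed. The inverse map reinserts an empty row. This gives the bijection. The claim about size is immediate, since \(|\widehat\lambda|=|\lambda|-\lambda_j\), and \(\lambda_j>0\) exactly when \(j\le\ell(\lambda)\). The exchanged-inner case is not asserted here, so nothing further is needed.
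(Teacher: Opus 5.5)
Your proposal is correct and is essentially the paper's proof. With row $j$ empty at every stretch, deleting it leaves the reading word (hence content and the lattice condition) and row weak increase unchanged. The only remaining issue is column strictness across the cut, and it disappears because the skew cells above and below row $j$ occupy disjoint column ranges. The paper phrases this as columns $>tL$ versus $\le tL$ with $L=\lambda_j=\mu_j$; you check only the newly adjacent rows $j-1$ and $j+1$, which suffices.

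One small slip: the relevant bound for a cell of row $j+1$ is $c\le t\lambda_{j+1}$, not $c>t\mu_{j+1}$. Your chain $t\lambda_{j+1}\le t\mu_j\le t\mu_{j-1}$ already uses the correct bound, so the argument is unaffected.
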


\begin{proof}
Put \(L=\lambda_j=\mu_j\). At stretch \(t\), row \(j\) is empty. All skew cells above it lie in columns greater than \(tL\), because their inner row lengths are at least \(tL\). All skew cells below it lie in columns at most \(tL\), because their outer row lengths are at most \(tL\).

Deleting the empty row introduces no column comparisons across the cut, since the two groups occupy disjoint column ranges. Other comparisons and the reading word remain unchanged. Inserting the row reverses the construction.
\end{proof}

\begin{theorem}[Initial-sum factorization]\label{thm:factor}
Suppose explicitly that $|\lambda|=|\mu|+|\nu|$,
$\mu\subseteq\lambda$, and $1\le k<n$ for a rank $n$, with $q_k=0$.
Splitting all three partitions after part $k$ gives
\[
P^\lambda_{\mu\nu}
=
P^{\lambda_{\le k}}_{\mu_{\le k},\nu_{\le k}}\,
P^{\lambda_{>k}}_{\mu_{>k},\nu_{>k}}.
                                                               \tag{9}
\]
Both triples are balanced. If \(1\le k<\ell(\lambda)\), both outer sizes are strictly smaller.
\end{theorem}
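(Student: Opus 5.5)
The plan is to work at a fixed stretch \(t\ge1\) with the array description of Lemma~\ref{lem:arrays} and to build a bijection
\[
\{\text{arrays for }(\lambda,\mu,\nu)\}\;\longleftrightarrow\;\{\text{arrays for }(\lambda_{\le k},\mu_{\le k},\nu_{\le k})\}\times\{\text{arrays for }(\lambda_{>k},\mu_{>k},\nu_{>k})\}.
\]
The second factor will be relabelled by \(i\mapsto i-k\). Counting both sides gives \(c^{t\lambda}_{t\mu,t\nu}\) as the product of the two stretched coefficients for every \(t\ge1\). Equality of polynomials then follows from agreement at all positive integers.

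Before the bijection, I will check the bookkeeping. Balance of the first triple is exactly \(q_k=0\). Balance of the second follows by subtracting this from \(|\lambda|=|\mu|+|\nu|\). The containments \(\mu_{\le k}\subseteq\lambda_{\le k}\) and \(\mu_{>k}\subseteq\lambda_{>k}\) are inherited. For the size statement, assume \(1\le k<\ell(\lambda)\). Then \(\lambda_{k+1}>0\), so the first outer size is smaller. Also \(\lambda_1>0\) lies in the first block, so the second outer size is smaller as well.

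The key structural step uses the count in Lemma~\ref{lem:arrays}: the number of labels at most \(k\) lying below row \(k\) equals \(tq_k=0\). Hence \(x_{j,i}=0\) for \(j>k\) and \(i\le k\). Combined with the support condition \eqref{eq:support}, the array is block diagonal: an upper-left \(k\times k\) triangular block and a lower-right \((n-k)\times(n-k)\) triangular block. I will then compare the constraints with those of the two smaller arrays, one family at a time.

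\begin{itemize}
\item \emph{Margins.} These split row-wise and column-wise. Block diagonality shows that the column sums for \(i\le k\) come only from rows \(j\le k\), and those for \(i>k\) only from rows \(j>k\).
\item \emph{Lattice inequalities \eqref{eq:lattice}.} For \(i<k\) and \(j\le k\) they are the first block's conditions. For \(i<k\) and \(j>k\), the extra terms vanish and the inequality reduces to the \(j=k\) case. For \(i>k\), the rows \(h\le k\) contribute zero, giving the second block's conditions after reindexing. For \(i=k\) the inequality reads \(\sum_{h<j}x_{h,k}-\sum_{h\le j}x_{h,k+1}\ge0\). I will argue it is implied by the second block's margins and support: at \(j\le k\) the subtracted term is zero. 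At \(j>k\), the \(x_{h,k+1}\) are zero for \(h=k+1<j\), so the subtracted sum is \(x_{k+1,k+1}\). This is at most \(t\nu_{k+1}\le t\nu_k=\sum_{h\le k}x_{h,k}\), and the last sum is the first subtracted-from sum for \(j>k\).
\item \emph{Column inequalities \eqref{eq:columns}.} For \(j\le k\), block diagonality makes the terms with \(b>k\) equal to the \(b=k\) case. For \(j\ge k+2\), the terms with \(b\le k\) vanish, leaving only the second block's condition. The crossing row \(j=k+1\) is what I expect to be the main obstacle. There the inequality reads \(t(\mu_k-\mu_{k+1})+\sum_{i<b}x_{k,i}-\sum_{i\le b}x_{k+1,i}\ge0\). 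For \(b\le k\), the subtracted part is zero. For \(b\ge k+1\), it becomes \(t(\mu_k-\mu_{k+1})+t(\lambda_k-\mu_k)-x_{k+1,k+1}\ge0\) (using the complete row sum of row \(k\)). I will bound this by \(x_{k+1,k+1}\le t(\lambda_{k+1}-\mu_{k+1})\le t(\lambda_k-\mu_{k+1})\), so the inequality holds automatically.
\end{itemize}

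Thus the constraints of the big array are exactly the conjunction of the two blocks' constraints, plus redundant ones. This gives the bijection and proves (9).

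If a verification step turns out subtler than expected, I will first try to resolve it with the row-length bound (5) and the prefix-count interpretation used in the proof of Lemma~\ref{lem:arrays}.
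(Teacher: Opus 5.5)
Your proposal is correct and is essentially the paper's proof written in array coordinates. In both, \(q_k=0\) forces the upper rows to exhaust the labels at most \(k\), which gives the block-diagonal structure. The cross-cut column conditions then hold automatically, and the only cross-cut lattice comparison, between labels \(k\) and \(k+1\), is settled by \(\nu_k\ge\nu_{k+1}\). This yields a bijection at every stretch.

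There is one slip. In the case \(i=k\), \(j>k\), the subtracted sum is \(\sum_{k<h\le j}x_{h,k+1}\), not \(x_{k+1,k+1}\) alone, since label \(k+1\) can occur in every lower row. It is still at most the column sum \(t\nu_{k+1}\), so your inequality goes through unchanged.
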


\begin{proof}
The first \(k\) rows use only labels at most \(k\). Since \(q_k=0\), they exhaust those labels. The lower rows use larger labels. Restriction to the upper rows and subtraction of \(k\) from the lower labels produces tableaux of the displayed triples.

Conversely, concatenate a pair of such tableaux, increasing the lower labels by \(k\). Column strictness across the cut follows from the separation of the label ranges. Lattice comparisons within each range are inherited. For the remaining comparison, the upper rows supply \(t\nu_k\) copies of \(k\), while every lower prefix has at most \(t\nu_{k+1}\) copies of \(k+1\). The inequality follows from \(\nu_k\ge\nu_{k+1}\).

This is a bijection at every stretch. Balance follows from \(q_k=0\) and total balance. A proper cut within the positive outer parts gives two positive, smaller outer sizes.

If both factors have nonnegative monomial coefficients, so does their product, since each product coefficient is a sum of products of nonnegative numbers. Hence a negative coefficient in the product requires a negative coefficient in a factor.
\end{proof}

\begin{lemma}[Full columns and scaling]\label{lem:basic}
If $\ell(\lambda),\ell(\mu),\ell(\nu)\le n$,
$\mu\subseteq\lambda$, and $\lambda_n,\mu_n\ge1$, subtracting $1$
from all $n$ parts of $\lambda$ and $\mu$ preserves $P$.
The exchanged-inner version holds as well. If all parts are divisible by \(g\ge1\), and \(Q\) is the divided triple's polynomial, then
\[
P(t)=Q(gt),\qquad [t^i]P=g^i[t^i]Q.
                                                               \tag{10}
\]
\end{lemma}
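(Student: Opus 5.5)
For the full-column claim I will work at a fixed stretch \(t\ge1\) with the array description of Lemma~\ref{lem:arrays} and show that the two triples have identical feasible arrays. Write \(\lambda',\mu'\) for the triple with one subtracted from every part. The margins in \eqref{eq:margins} involve \(\lambda_j-\mu_j\) and \(\nu_i\). Subtracting the same amount from \(\lambda_j\) and \(\mu_j\) leaves them unchanged. The support condition \eqref{eq:support} and the lattice condition \eqref{eq:lattice} do not mention \(\lambda,\mu\) at all. The column condition \eqref{eq:columns} involves only the differences \(\mu_{j-1}-\mu_j\), which are also unchanged.

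Before comparing arrays, I will check that the hypotheses of Lemma~\ref{lem:arrays} still hold. Since \(\lambda_n,\mu_n\ge1\), the shifted sequences are nonnegative and weakly decreasing, so they are partitions of length at most \(n\), and \(\mu'\subseteq\lambda'\). The subtraction removes \(n\) from both \(|\lambda|\) and \(|\mu|\), so balance is preserved. The two triples therefore have the same array set at every stretch, and hence the same LR coefficients.

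The polynomials then agree at every positive integer, so by the remark on roots in Section~1 they are equal. I expect the only delicate point to be confirming that \eqref{eq:columns} depends on \(\mu\) solely through consecutive differences. This is exactly what the statement of the lemma shows. Geometrically, it reflects that the \(t\) full columns being deleted contain no skew cells.

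For the exchanged-inner version I will use the symmetry \(c^\lambda_{\mu\nu}=c^\lambda_{\nu\mu}\) at every stretch. This gives \(P^\lambda_{\mu\nu}=P^\lambda_{\nu\mu}\). Applying the first part to \((\lambda,\nu,\mu)\), under the hypotheses \(\nu\subseteq\lambda\) and \(\lambda_n,\nu_n\ge1\), then transfers the result.

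For the scaling statement, suppose every part of the triple is divisible by \(g\), and let \(T/g\) denote the divided triple. For \(t\ge1\) we have \(tT=(gt)(T/g)\), so
\[
P(t)=c^{t\lambda}_{t\mu,t\nu}=Q(gt).
\]
This holds at every positive integer \(t\), so \(P(t)=Q(gt)\) as polynomials, again by the root-counting remark. Expanding \(Q(gt)=\sum_i([t^i]Q)\,g^it^i\) and reading off the coefficient of \(t^i\) gives \([t^i]P=g^i[t^i]Q\). Since \(g\ge1\), this also shows that the coefficient signs of \(P\) and \(Q\) agree.
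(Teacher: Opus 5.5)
Your proposal is correct and takes essentially the paper's approach. The scaling argument is identical. For full-column removal, the paper notes that the stretched skew diagram shifts $t$ columns left with reading word and column comparisons unchanged; your check that the conditions of Lemma~\ref{lem:arrays} depend on $\lambda,\mu$ only through $\lambda_j-\mu_j$ and $\mu_{j-1}-\mu_j$ is the same argument in coordinates, and your treatment of the exchanged-inner case via inner symmetry is sound.
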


\begin{proof}
Full-column removal translates every skew cell one column left. At stretch \(t\), translate by \(t\) columns. The reading word and relative column comparisons are unchanged.

For division, the original triple stretched by \(t\) is the divided triple stretched by \(gt\). This proves the polynomial identity. Each \(g^i\) is positive, so coefficient signs are preserved.
\end{proof}

\section{Six representatives}

\begin{lemma}[Rectangular complement]\label{lem:complement}
For \(\ell(\alpha)\le n\) and an integer \(M\ge0\) with \(M\ge\alpha_1\), put
\[
\alpha^{c,M}=(M-\alpha_n,\ldots,M-\alpha_1).
\]
Then
\[
s_{\alpha^{c,M}}(x_1,\ldots,x_n)
=(x_1\cdots x_n)^M s_\alpha(x_1^{-1},\ldots,x_n^{-1}),
                                                               \tag{11}
\]
and consequently
\[
V_\alpha^*\cong\det^{-M}\otimes V_{\alpha^{c,M}}.
                                                               \tag{12}
\]
\end{lemma}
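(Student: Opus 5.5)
The plan is to prove the Schur polynomial identity (11) first, from the bialternant formula, and then read off (12) from characters. Throughout, write \(\delta=(n-1,n-2,\ldots,0)\), \(x=(x_1,\ldots,x_n)\), and
\[
a_\beta(x)=\det\bigl(x_i^{\beta_j}\bigr)_{i,j},
\qquad
s_\alpha=a_{\alpha+\delta}/a_\delta .
\]
The hypothesis \(M\ge\alpha_1\ge\alpha_n\ge0\) makes \(\alpha^{c,M}\) a genuine partition of length at most \(n\), so \(s_{\alpha^{c,M}}(x_1,\ldots,x_n)\) is defined.

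The key step is a determinant computation. First, for any exponent vector \(\beta\),
\[
a_\beta(x^{-1})=(x_1\cdots x_n)^{-N}\,a_{(N-\beta_1,\ldots,N-\beta_n)}(x),
\]
because multiplying row \(i\) by \(x_i^{N}\) turns \(x_i^{-\beta_j}\) into \(x_i^{N-\beta_j}\).

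Next, take \(N=M+n-1\) and \(\beta=\alpha+\delta\). Then \(N-(\alpha_j+n-j)=M-\alpha_j+j-1\). Reversing the column order (\(j\mapsto n+1-j\)) turns this exponent vector into \(\alpha^{c,M}+\delta\), and the reversal introduces the sign \(\varepsilon=(-1)^{n(n-1)/2}\).

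Applying the same identity to the denominator with \(\alpha=0\), \(M=0\), \(N=n-1\) gives exponents \(j-1\), which reverse to \(\delta\) with the same sign \(\varepsilon\) and factor \((x_1\cdots x_n)^{-(n-1)}\).

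Taking the quotient, the signs cancel and the monomial factors combine:
\[
s_\alpha(x^{-1})=(x_1\cdots x_n)^{-M}\,s_{\alpha^{c,M}}(x).
\]
This is (11). The main obstacle is only the bookkeeping of the reversal sign and of \(N\), which I will check carefully on the denominator as described. An alternative would be via the column strictness of SSYT complements, but the determinant route is cleaner.

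For (12), I compare characters on the diagonal torus. The character of \(V_\alpha^*\) at \(\mathrm{diag}(x)\) is \(s_\alpha(x^{-1})\), since the dual inverts the torus eigenvalues. The character of \(\det^{-M}\otimes V_{\alpha^{c,M}}\) is \((x_1\cdots x_n)^{-M}s_{\alpha^{c,M}}(x)\), and these agree by (11).

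Both representations are finite-dimensional rational \(GL_n(\mathbb C)\)-representations. By the complete reducibility cited from Milne, each is determined up to isomorphism by its character. Concretely, diagonalizable elements are dense, so the class functions agree on all of \(GL_n(\mathbb C)\). Moreover, irreducible characters, namely Schur polynomials twisted by powers of \(\det\), are linearly independent Laurent polynomials. Hence the isomorphism (12) follows.

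Finally I note that the right-hand side is irreducible, which is consistent with \(V_\alpha^*\) being irreducible.
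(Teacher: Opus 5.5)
Your proof is correct, but it reaches (11) by a different route than the paper. The paper works directly with the tableau definition. It views a semistandard tableau of shape \(\alpha\) with entries at most \(n\) as \(M\) columns, empty ones allowed. It replaces each column's entry set by its complement in \(\{1,\ldots,n\}\) and reverses the column order. Row weak increase between adjacent columns \(A,B\) is the prefix condition \(|A\cap\{1,\ldots,r\}|\ge|B\cap\{1,\ldots,r\}|\), and complementation reverses it. So this is an involutive bijection onto tableaux of shape \(\alpha^{c,M}\) sending each label multiplicity \(e_i\) to \(M-e_i\), and summing weights gives (11).

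You instead use the bialternant formula \(s_\alpha=a_{\alpha+\delta}/a_\delta\), and your bookkeeping checks out. With \(N=M+n-1\), the exponents \(M-\alpha_j+j-1\) reverse to \(\alpha^{c,M}+\delta\), and the common sign \((-1)^{n(n-1)/2}\) cancels against the denominator. Your computation is shorter and purely algebraic. However, it imports the bialternant formula, which is a genuine theorem (equality of the combinatorial and bialternant definitions) when Schur polynomials are defined by tableaux, as they are elsewhere in the paper. The paper's argument stays inside the tableau model it already uses and exhibits an explicit weight-complementing bijection.

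The deduction of (12) is essentially the same in both. You compare torus characters and use complete reducibility together with linear independence of the irreducible characters \(\det^k s_\beta\). Your density remark is harmless but unnecessary, since agreement on the torus already determines the multiplicities.
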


\begin{proof}
Regard a tableau as \(M\) columns, allowing empty columns. Complement each column's entry set in \(\{1,\ldots,n\}\) and reverse the column order.

For adjacent original columns with sets \(A,B\), row weak increase is equivalent to
\[
|A\cap\{1,\ldots,r\}|\ge |B\cap\{1,\ldots,r\}|
\quad(1\le r\le n).
\]
Complementation reverses these inequalities, so the reversed complemented columns satisfy row weak increase. The shape becomes \(\alpha^{c,M}\), and the operation is an involution.

If label \(i\) occurred \(e_i\) times, it now occurs \(M-e_i\) times. Summing weights proves (11). Taking dual characters and using the supplied irreducible Schur-character interpretation gives (12).
\end{proof}

\begin{theorem}[Six polynomial-preserving representatives]\label{thm:six}
Suppose \(c^\lambda_{\mu\nu}>0\). Choose an integer \(n\ge1\)
at least as large as all three partition lengths, pad to \(n\), and assume
\(\mu_n=\nu_n=0\). The all-empty triple separately has polynomial \(1\).
Write
\[
N=|\lambda|,\ m=|\mu|,\ v=|\nu|,\quad
L=\lambda_1,\ h=\lambda_n,\ u=\mu_1,\ w=\nu_1.
\]
Six balanced partition triples of length at most \(n\) have polynomial \(P^\lambda_{\mu\nu}\) and outer sizes
\[
N,\quad nL-m,\quad nL-v,\quad n(u+w)-N,\quad
n(u-h)+v,\quad n(w-h)+m.
                                                               \tag{13}
\]
\end{theorem}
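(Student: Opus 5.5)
The plan is to use the representation-theoretic reading $c^\lambda_{\mu\nu}=\dim\operatorname{Hom}_{GL_n}(V_\lambda,V_\mu\otimes V_\nu)$ and the $S_3$-type symmetry it carries, together with Lemma~\ref{lem:complement}. Write $\alpha^c=\alpha^{c,M}$ for a suitable $M$. Since $\det$ is one-dimensional, every identity I derive will be an equality of multiplicities valid at each stretch $t$. This is because stretching commutes with complementation: $(t\alpha)^{c,tM}=t(\alpha^{c,M})$. Hence each identity gives equality of polynomials at all positive integers, and so equality of polynomials. The first representative, of outer size $N$, is the triple itself, and exchange of inner partitions is free.

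First I produce the basic duality moves. By tensor--Hom adjunction and duality, $\operatorname{Hom}(V_\lambda,V_\mu\otimes V_\nu)\cong\operatorname{Hom}(V_\mu^*\otimes V_\lambda,V_\nu)$. Using (12) with $M=L$ for $\lambda$ and $M=u$ for $\mu$, and complete reducibility to compare multiplicities, I get
\[
c^\lambda_{\mu\nu}=c^{\lambda^{c,L}}_{\mu\,\cdot}\quad\text{type identities.}
\]
Concretely, I will use two moves.

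\textbf{Move A.} $c^\lambda_{\mu\nu}=c^{\mu^{c,M}}_{\lambda^{c,M},\nu}$ for $M\ge\lambda_1$. This comes from dualizing $V_\lambda\subset V_\mu\otimes V_\nu$ into $V_\mu^*\subset V_\lambda^*\otimes V_\nu$ and twisting by $\det^M$. It is balanced, since $nM-m=(nM-N)+v$.

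\textbf{Move B.} $c^\lambda_{\mu\nu}=c^{\nu^{c,M}}_{\lambda^{c,M},\mu}$, the same move with $\nu$ in place of $\mu$.

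I then record outer sizes for chosen $M$. Move A with $M=L$ gives outer size $nL-m$, and Move B with $M=L$ gives $nL-v$. For the remaining three I take $M$ minimal subject to all three entries of the new triple being partitions, with length at most $n$ automatic.

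\textbf{Fourth representative.} Apply Move A to the triple $(\lambda,\mu,\nu)$ with $M=u+w$. Then I use the symmetric move producing outer $\lambda$ complemented appropriately: $\lambda\mapsto(\mu^{c,u}\text{-ish})$. More cleanly, the identity $c^\lambda_{\mu\nu}=c^{\lambda^{c}}_{\mu^{c},\nu^{c}}$, with complements taken in $u$, $w$ and $u+w$ respectively, gives outer size $n(u+w)-N$ and inner sizes $nu-m$ and $nw-v$. This is balanced, and it follows from $V_\lambda^*\subset V_\mu^*\otimes V_\nu^*$ together with $\det^{-u-w}=\det^{-u}\det^{-w}$. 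It needs $\lambda_1\le u+w$, which holds because positivity gives $\lambda\subseteq\mu+\nu$.

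\textbf{Fifth and sixth representatives.} These compose Move A (respectively B) with the fourth move. The exponent is shifted by $h=\lambda_n$ so that the smallest part of the new outer partition is exactly the smallest admissible value. For instance, for the fifth, take the triple $(\mu^{c,u-h}\ldots)$ arising from $V_\mu\subset V_\lambda\otimes V_\nu^*$ with twists $\det^{u-h}$. Its outer partition is $\mu$ complemented in $u$, and its inner partitions are $\lambda$ complemented with $-h$ shift (i.e.\ $\lambda-h$ complemented...). I will verify the sizes by direct summation, yielding $n(u-h)+v$ and $n(w-h)+m$.

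The main obstacle is bookkeeping in this last step. I must choose exponents so that every new sequence is weakly decreasing and nonnegative, i.e.\ a genuine partition. This needs bounds such as $\lambda_n\le\mu_n+\nu_1$ and $\lambda_1\le u+w$, and these follow from $c^\lambda_{\mu\nu}>0$ (Horn-type inequalities visible in the tableau model: $\mu\subseteq\lambda$, and (5)). The hypothesis $\mu_n=\nu_n=0$ is used exactly to make the complements in $u$ and $w$ have lengths at most $n$ and to pin the sizes. I would first write each of the six triples explicitly with the $\det$ twists, check partition-ness from $\mu,\nu\subseteq\lambda\subseteq\mu+\nu$ and (5), and only then sum parts.
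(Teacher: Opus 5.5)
Your framework is the same as the paper's: $GL_n$ multiplicities, tensor--Hom duality, the complement formula (12) with $\det$ twists, and compatibility with stretching. The paper packages the symmetry as $\dim\operatorname{Hom}_G(\det^L,V_\mu\otimes V_\nu\otimes V_{\lambda^{c,L}})$, followed by one global dualization with $K^*=u+w-h$. Your Move A, Move B and fourth identity $c^\lambda_{\mu\nu}=c^{\lambda^{c,u+w}}_{\mu^{c,u},\nu^{c,w}}$ are correct, and they give $nL-m$, $nL-v$ and $n(u+w)-N$.

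\textbf{The gap is in the fifth and sixth representatives.} You never write them down, and the instance you sketch is wrong:
\begin{itemize}
\item $\mu^{c,u-h}$ is undefined unless $h=0$.
\item An outer partition equal to $\mu^{c,u}$ has size $nu-m$, which differs from $n(u-h)+v$ unless $nh=N$.
\item The $\lambda$-inner partition is a shift of $\lambda$, not a complement.
\item $V_\mu\subset V_\lambda\otimes V_\nu^*$ produces size $n(w-h)+m$ with twist $\det^{w-h}$, not $\det^{u-h}$.
\end{itemize}
Separately, your justification ``positivity gives $\lambda\subseteq\mu+\nu$'' is false. The triple $\lambda=(1,1)$, $\mu=\nu=(1)$ has $c^\lambda_{\mu\nu}=1$ but $\lambda_2>\mu_2+\nu_2$. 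You only need $\lambda_1\le\mu_1+\nu_1$, which is (5) at $j=1$.

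\textbf{Your own composition idea fixes this if you carry it out literally.} Note that Move B, not Move A, produces $n(u-h)+v$. Move B with $M=L$ gives $(\nu^{c,L};\lambda^{c,L},\mu)$, whose inner first parts are $L-h$ and $u$. Apply the fourth identity to this triple, with complements in $L-h$, $u$ and $L-h+u$:
\[
c^\lambda_{\mu\nu}
=c^{(\nu_1+u-h,\ldots,\nu_n+u-h)}_{\mu^{c,u},\,(\lambda_1-h,\ldots,\lambda_n-h)},
\]
which has outer size $n(u-h)+v$. Equivalently, start from $\operatorname{Hom}_G(V_\nu,V_\lambda\otimes V_\mu^*)$ and use $V_\lambda\cong\det^h\otimes V_{(\lambda_1-h,\ldots,\lambda_n-h)}$.

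Move A followed by the fourth identity gives outer partition $(\mu_1+w-h,\ldots,\mu_n+w-h)$ and inner partitions $\nu^{c,w}$ and $(\lambda_1-h,\ldots,\lambda_n-h)$, of size $n(w-h)+m$.

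The only new requirement is on the intermediate outer first part, which equals $L$ because $\nu_n=0$, respectively $\mu_n=0$. It must satisfy $L\le L-h+u$, respectively $L\le L-h+w$; that is, $h\le u$ and $h\le w$. These follow from (5) at $j=n$ with $\mu_n=0$, and from its exchanged form with $\nu_n=0$. The resulting triples are exactly the paper's $O_{K^*}(B^*)$ and $O_{K^*}(A^*)$. Shifts and complements both commute with stretching, so your passage from multiplicities to polynomials is unaffected.
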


\begin{proof}
For \(K\ge W_1\), define
\[
O_K(W)=(K-W_n,\ldots,K-W_1).
\]
Set \(A=\mu\), \(B=\nu\), \(C=\lambda^{c,L}\). Their last parts are zero and their sizes sum to \(nL\).

For \(G=GL_n\), the representation interpretation and tensor-Hom identification give
\[
c^\lambda_{\mu\nu}
=
\dim\operatorname{Hom}_G
\bigl(\det^L,V_A\otimes V_B\otimes V_C\bigr).
                                                               \tag{14}
\]
Multiplicity equals the indicated Hom dimension because equivariant maps between distinct irreducibles vanish and an equivariant endomorphism of an irreducible is scalar. For the latter statement, an eigenspace of an equivariant endomorphism is a nonzero invariant subspace and hence the whole irreducible.

Expression (14) is symmetric in \(A,B,C\). Their first parts are at most \(L\): containment gives this for \(A,B\), and \(C_1=L-h\). Choosing one as \(W\), outer partition \(O_L(W)\), and the other two as inner partitions gives the first three representatives.

For \(W_n=0\), put
\[
W^*=(W_1-W_n,\ldots,W_1-W_1).
\]
Dualizing each irreducible summand preserves the corresponding dual multiplicity. Formula (12) shows that (14) also equals
\[
\dim\operatorname{Hom}_G
\bigl(\det^{K^*},
V_{A^*}\otimes V_{B^*}\otimes V_{C^*}\bigr),
\qquad K^*=u+w-h.
                                                               \tag{15}
\]
Lemma~\ref{lem:arrays} gives \(L\le u+w\), \(h\le w\), and, by exchange symmetry, \(h\le u\). Thus
\[
K^*-A_1=w-h\ge0,\quad
K^*-B_1=u-h\ge0,\quad
K^*-C_1=u+w-L\ge0.
\]
The three further outer partitions \(O_{K^*}(W)\), with \(W\) chosen from \(A^*,B^*,C^*\), are therefore valid.

All constructions commute with stretching. Thus their polynomial identities follow from the multiplicity identities.

Finally,
\[
|C|=nL-N,\quad |A^*|=nu-m,\quad |B^*|=nw-v,\quad |C^*|=N-nh.
\]
Taking the pair sums in the two triples gives (13).
\end{proof}

\section{The minimum-size domain}

\begin{theorem}[Necessary conditions]\label{thm:domain}
Assume \(\mathcal B_B\), and choose a counterexample of minimum outer size \(N\), if one exists. Put \(n=\ell(\lambda)\). Then:
\begin{enumerate}
\item \(B<N\le30\), the base multiplicity is positive, and the gcd of all positive parts is one.
\item \(\mu_n=\nu_n=0\), and \(q_k>0\) for \(1\le k<n\).
\item \(\lambda_j>\mu_j,\nu_j\) for every \(j\le n\).
\item \(\mu_k,\nu_k\le\lambda_{k+1}\) for \(k<n\).
\item For every $1\le k<n$ with $\lambda_k>\lambda_{k+1}$,
\[
\begin{split}
\mu_k-\mu_{k+1}&\le\min(q_k,\lambda_{k+1}-\mu_{k+1}),\\
\nu_k-\nu_{k+1}&\le\min(q_k,\lambda_{k+1}-\nu_{k+1}).
\end{split}                                                    \tag{16}
\]
\item All six sizes in (13) are at least \(N\), equivalently
\[
\max(m,v)\le nL-N,\quad 2N\le n(u+w),\quad
m\le n(u-h),\quad v\le n(w-h).
                                                               \tag{17}
\]
\end{enumerate}
\end{theorem}

\begin{proof}
The supplied zero-multiplicity implication gives positive base multiplicity. The hypothesis \(\mathcal B_B\) gives \(N>B\).

Common division or an applicable full-column removal would preserve a negative coefficient and decrease the outer size. This proves the gcd assertion and the zero last parts.

The deficits are nonnegative by Lemma~\ref{lem:arrays}. A zero proper deficit permits factorization by Theorem~\ref{thm:factor}; some smaller factor would have a negative coefficient. Hence the proper deficits are positive.

Equality \(\lambda_j=\mu_j\) permits deletion of a positive outer part by Lemma~\ref{lem:emptyrow}; equality with \(\nu_j\) is treated by exchange symmetry. Thus both containments are strict.

Failure of (16) at a positive outer gap allows \(a=1\) in Theorem~\ref{thm:shortening}, giving a smaller counterexample. At such a gap, (16) implies
\[
\mu_k-\mu_{k+1}\le\lambda_{k+1}-\mu_{k+1},
\]
hence \(\mu_k\le\lambda_{k+1}\). At a zero outer gap, containment gives the same conclusion. Exchange symmetry handles \(\nu\).

Finally, a symmetry representative of outer size less than \(N\) would automatically remain in the box: its length is at most \(n\), and each inner size is bounded by its balanced outer size. Minimality therefore makes all six sizes at least \(N\). Rearranging these inequalities gives (17).
\end{proof}

\begin{corollary}[The containing partition and deficit]\label{cor:kappa}
Under Theorem~\ref{thm:domain}, put \(\lambda_{n+1}=0\) and
\[
\kappa_j=\min(\lambda_j-1,\lambda_{j+1}),\qquad
e=\#\{j<n:\lambda_j=\lambda_{j+1}\}.
\]
Then \(\kappa\) is a partition with last part zero,
\[
\mu,\nu\subseteq\kappa,\qquad |\kappa|=N-L-e,
                                                               \tag{18}
\]
and
\[
s=N-2(L+e)\ge0,\qquad
(|\kappa|-|\mu|)+(|\kappa|-|\nu|)=s.
                                                               \tag{19}
\]
If \(s=0\), necessarily \(\mu=\nu=\kappa\). If \(s=1\), up to exchange, one inner partition is \(\kappa\) and the other is obtained by removing one removable corner.
\end{corollary}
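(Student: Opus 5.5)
The corollary follows directly from items 3 and 4 of Theorem~\ref{thm:domain}, the zero last parts in item 2, and balance. I would compute \(|\kappa|\) first, then prove containment, and obtain (19) by subtraction.

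\textbf{Step 1: \(\kappa\) is a partition with \(\kappa_n=0\).} Both sequences \(j\mapsto\lambda_j-1\) and \(j\mapsto\lambda_{j+1}\) are weakly decreasing. Hence their pointwise minimum is weakly decreasing. Its entries are nonnegative because \(\lambda_j\ge1\) for \(j\le n=\ell(\lambda)\). At \(j=n\) we get \(\kappa_n=\min(\lambda_n-1,0)=0\), since \(\lambda_{n+1}=0\) and \(\lambda_n\ge1\).

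\textbf{Step 2: the size of \(\kappa\).} I would evaluate \(\kappa_j\) index by index over \(1\le j\le n\).
\begin{itemize}
\item If \(\lambda_j>\lambda_{j+1}\), then \(\lambda_j-1\ge\lambda_{j+1}\), so \(\kappa_j=\lambda_{j+1}\). This always applies at \(j=n\).
\item If \(\lambda_j=\lambda_{j+1}\), which can only happen for \(j<n\), then \(\kappa_j=\lambda_{j+1}-1\).
\end{itemize}
Summing gives
\[
|\kappa|=\sum_{j=1}^{n}\lambda_{j+1}-e=(N-\lambda_1)-e=N-L-e .
\]

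\textbf{Step 3: containment.} For \(j<n\), item 3 gives \(\mu_j\le\lambda_j-1\) and item 4 gives \(\mu_j\le\lambda_{j+1}\). Together these give \(\mu_j\le\kappa_j\). For \(j=n\), item 2 gives \(\mu_n=0=\kappa_n\). The same argument applies to \(\nu\).

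\textbf{Step 4: the two deficits.} Balance gives \(|\mu|+|\nu|=N\). Therefore
\[
(|\kappa|-|\mu|)+(|\kappa|-|\nu|)=2(N-L-e)-N=N-2(L+e)=s .
\]
Each bracket is nonnegative by containment, so \(s\ge0\).

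\textbf{Step 5: the cases \(s=0\) and \(s=1\).}
\begin{itemize}
\item If \(s=0\), both deficits vanish. Containment with equal sizes forces \(\mu=\nu=\kappa\).
\item If \(s=1\), one deficit is \(0\) and the other is \(1\). Up to exchanging \(\mu\) and \(\nu\), containment forces one inner partition to equal \(\kappa\). The other is then a partition contained in \(\kappa\) with exactly one fewer cell. A single removed cell yields a partition precisely when it is a removable corner of \(\kappa\).
\end{itemize}

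There is no real obstacle here. The only point needing care is the index bookkeeping in Step 2: the convention \(\lambda_{n+1}=0\) makes \(j=n\) a strict descent, and \(e\) counts only the indices \(j<n\).
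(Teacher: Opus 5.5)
Your proof is correct and follows the paper's argument step for step. You take $\kappa$ as the coordinatewise minimum of two decreasing sequences, get containment from item 3 plus interlacing (item 4), compute the size by splitting into positive gaps and equalities, and obtain (19) by subtracting $|\mu|+|\nu|=N$ from $2|\kappa|$; your explicit use of item 2 at $j=n$, where item 4 is not stated, makes precise a point the paper leaves implicit.
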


\begin{proof}
The coordinatewise minimum of the nonnegative decreasing sequences \(\lambda_j-1\) and \(\lambda_{j+1}\) is nonnegative and decreasing. Its last part is zero.

Strict containment gives \(\mu_j,\nu_j\le\lambda_j-1\); interlacing gives the other bound needed for containment in \(\kappa\). At a positive outer gap, \(\kappa_j=\lambda_{j+1}\). At an equality, \(\kappa_j=\lambda_{j+1}-1\). Summing proves (18).

Subtracting \(|\mu|+|\nu|=N\) from \(2|\kappa|\) gives (19). Both deficits are nonnegative. Deficit zero forces equality of both subpartitions with \(\kappa\); deficit one gives the stated removable-corner description.
\end{proof}

\section{Array notation and bounds}
Use Lemma~\ref{lem:arrays} at stretch one and write
\[
 S_{j,i}=\sum_{h\le j}x_{h,i},\quad S_{0,i}=0,\qquad
 y_{j,b}=\mu_j+\sum_{i\le b}x_{j,i},\quad y_{j,0}=\mu_j.
\]
The lattice and column slacks are respectively
\begin{align}
 L_{j,i}&=S_{j-1,i}-S_{j,i+1}\ge0
       &&(1\le j\le n,\ 1\le i<n),\label{rect:eq:lattice}\\
 C_{j,b}&=y_{j-1,b-1}-y_{j,b}\ge0
       &&(2\le j\le n,\ 1\le b\le n).\label{rect:eq:columns}
\end{align}

\begin{lemma}\label{rect:lem:bounds}
For an array in Lemma~\ref{lem:arrays}, put
\[
R_j(b)=\sum_{i=b}^{n}x_{j,i}.
\]
Then
\begin{equation}\label{rect:eq:suffix}
R_j(b)\le S_{j,b}\le\nu_b.
\end{equation}
Whenever $j>b$,
\begin{equation}\label{rect:eq:diagonal}
y_{j,b}\le\mu_{j-b}.
\end{equation}
Consequently, for $p,q\ge1$,
\begin{equation}\label{rect:eq:weyl}
\lambda_{p+q+1}\le\mu_{p+1}+\nu_{q+1},
\end{equation}
where a part beyond the chosen length is zero.
\end{lemma}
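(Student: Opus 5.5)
The three bounds come in the stated order, and each uses the previous one. Everything is at stretch one, with the array of Lemma~\ref{lem:arrays}, so $\mu\subseteq\lambda$ and the column sums are $S_{n,i}=\nu_i$.

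\emph{Suffix bound \eqref{rect:eq:suffix}.} The right inequality holds because $x\ge0$, so $S_{j,b}\le S_{n,b}=\nu_b$ by the column margin in \eqref{eq:margins}. For the left inequality, use $S_{j,i+1}=S_{j-1,i+1}+x_{j,i+1}$ to rewrite the lattice slack \eqref{rect:eq:lattice} as
\[
x_{j,i+1}\le S_{j-1,i}-S_{j-1,i+1}\qquad(b\le i<n).
\]
Summing over $i=b,\dots,n-1$ telescopes to
\[
R_j(b+1)\le S_{j-1,b}-S_{j-1,n}\le S_{j-1,b}.
\]
Adding $x_{j,b}$ to both sides gives $R_j(b)\le S_{j,b}$. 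When $b=n$ the sum is empty, and the claim reduces to $x_{j,n}\le S_{j,n}$, which is immediate. This is the same telescoping already used at the end of the proof of Lemma~\ref{lem:arrays}.

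\emph{Diagonal bound \eqref{rect:eq:diagonal}.} Induct on $b\ge0$ for all $j>b$. The base case $b=0$ is the identity $y_{j,0}=\mu_j$. For $b\ge1$ and $j>b$ we have $j\ge2$, so the column slack \eqref{rect:eq:columns} gives $y_{j,b}\le y_{j-1,b-1}$. Since $j-1>b-1$, the induction hypothesis gives $y_{j-1,b-1}\le\mu_{(j-1)-(b-1)}=\mu_{j-b}$.

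\emph{Weyl-type inequality \eqref{rect:eq:weyl}.} Put $j=p+q+1$. If $j>n$, then $\lambda_j=0$ and the inequality is trivial. Otherwise the row margin in \eqref{eq:margins} splits
\[
\lambda_j=\mu_j+\sum_i x_{j,i}=y_{j,q}+R_j(q+1),
\]
where $R_j(q+1)=0$ if $q+1>n$. Since $j>q$, the diagonal bound gives $y_{j,q}\le\mu_{j-q}=\mu_{p+1}$. If $q+1\le n$, the suffix bound gives $R_j(q+1)\le\nu_{q+1}$. Adding these proves \eqref{rect:eq:weyl}. Parts with index beyond $n$ are zero, which matches the stated convention.

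The only delicate step is the index bookkeeping in the telescoping sum, including the boundary case $b=n$. Everything else is a direct chaining of the slack inequalities.
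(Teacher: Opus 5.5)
Your proposal is correct and follows the paper's proof essentially step for step. You use the same telescoped lattice inequalities for the suffix bound, including the separate case $b=n$; the same chain of column inequalities for the diagonal bound, which you phrase as an induction; and the same row split $\lambda_{p+q+1}=y_{p+q+1,q}+R_{p+q+1}(q+1)$ for the Weyl-type inequality. Your caveat about $q+1>n$ is vacuous: $p\ge1$ and $p+q+1\le n$ already force $q+1<n$.
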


\begin{proof}
For $b<n$, the lattice inequalities give
\[
x_{j,i+1}\le S_{j-1,i}-S_{j-1,i+1}.
\]
Summing over $i=b,\ldots,n-1$ yields
\[
\sum_{i=b+1}^{n}x_{j,i}
 \le S_{j-1,b}-S_{j-1,n}
 \le S_{j-1,b}.
\]
Adding $x_{j,b}$ proves the first inequality in
\eqref{rect:eq:suffix}. For $b=n$, it is simply
$x_{j,n}\le S_{j,n}$. The content condition proves
the second inequality.

Iterating the column inequalities $b$ times gives
\[
y_{j,b}\le y_{j-1,b-1}\le\cdots
 \le y_{j-b,0}=\mu_{j-b},
\]
proving \eqref{rect:eq:diagonal}.

If $p+q+1\le n$, the two established bounds give
\[
\lambda_{p+q+1}
 =y_{p+q+1,q}+R_{p+q+1}(q+1)
 \le\mu_{p+1}+\nu_{q+1}.
\]
Otherwise the left side is zero.
\end{proof}

\section{Rectangular shortening}

For $k\ge1$, write $\alpha-a(1^k)$ for subtraction
of $a$ from the first $k$ parts of $\alpha$.

\begin{theorem}\label{rect:thm:rectangle}
Suppose $\lambda,\mu,\nu$ are balanced partitions with
$\mu,\nu\subseteq\lambda$. Let $p,q\ge1$, put $r=p+q$,
and let $a\ge1$ be an integer satisfying
\begin{equation}\label{rect:eq:hypotheses}
\mu_p-a\ge\mu_{p+1},\qquad
\nu_q-a\ge\nu_{q+1},\qquad
\lambda_r-a\ge
\max\{\lambda_{r+1},\mu_{p+1}+\nu_{q+1}\}.
\end{equation}
Define
\[
\widehat\lambda=\lambda-a(1^r),\qquad
\widehat\mu=\mu-a(1^p),\qquad
\widehat\nu=\nu-a(1^q).
\]
These are balanced partitions, and
\begin{equation}\label{rect:eq:identity}
P^\lambda_{\mu\nu}
 =
P^{\widehat\lambda}_{\widehat\mu\,\widehat\nu}.
\end{equation}
\end{theorem}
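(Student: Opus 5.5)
The plan is to prove \eqref{rect:eq:identity} by an explicit bijection between the arrays of Lemma~\ref{lem:arrays} for the two triples at every stretch $t\ge1$. This is the same strategy as in Theorem~\ref{thm:shortening}, and polynomial equality then follows from agreement at all positive integers. By homogeneity, every inequality in \eqref{rect:eq:hypotheses} scales by $t$, so I will argue at stretch one with $a$ replaced by $ta$.

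First, the routine checks. The three hypotheses in \eqref{rect:eq:hypotheses} show that $\widehat\lambda,\widehat\mu,\widehat\nu$ are weakly decreasing and nonnegative. Balance holds because $ra=pa+qa$. Containment $\widehat\mu,\widehat\nu\subseteq\widehat\lambda$ follows from $\lambda_r-a\ge\mu_{p+1}+\nu_{q+1}$ together with the old containment.

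Next, compare the margins. Rows $j\le p$ keep their lengths $\lambda_j-\mu_j$. Rows $p<j\le r$ lose $a$. Contents $i\le q$ lose $a$. So exactly $q$ rows and $q$ labels each lose $a$, and I will try the map $x\mapsto\widehat x$ that subtracts $a$ from the diagonal entries $x_{p+i,i}$, $1\le i\le q$, leaving all other entries fixed. Triangular support is preserved because $p+i>i$.

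The key lower bound is $x_{p+i,i}\ge a$ for every original array. I would write
\[
\lambda_{p+i}=y_{p+i,i-1}+x_{p+i,i}+R_{p+i}(i+1).
\]
I would then bound the first term by $\mu_{p+1}$ using \eqref{rect:eq:diagonal}. For the last term I would use \eqref{rect:eq:suffix}, sharpened via the lattice inequalities so that it is compared against $\nu_{q+1}$ rather than $\nu_{i+1}$. Combined with $\lambda_{p+i}\ge\lambda_r\ge a+\mu_{p+1}+\nu_{q+1}$, this gives the bound. The sharpening is where I expect the main obstacle. If the diagonal entries do not work, I would instead subtract $a$ along a different monotone lattice path through the block of rows $p+1..r$ and columns $1..q$, chosen so that the needed lower bound follows from Lemma~\ref{rect:lem:bounds}. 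The hypothesis involving $\mu_{p+1}+\nu_{q+1}$ is exactly the case $p,q$ of \eqref{rect:eq:weyl}, which strongly suggests this block is the right region.

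Then I verify the slacks. For the lattice slacks $L_{j,i}$, only $i,i+1\le q$ with $j\ge p+i$ are affected. Since both $S_{j-1,i}$ and $S_{j,i+1}$ drop by $a$ once both diagonal entries lie in the prefix, the slacks are unchanged there. The boundary cases $j=p+i$ and $j=p+i+1$ need a separate check, which again uses the lower bound on diagonal entries. For the column slacks $C_{j,b}$, the endpoints $y_{j,b}$ shift by $-a$ exactly for $j\le p$ (via $\widehat\mu$) or for $p<j\le r$ with $b\ge j-p$. Hence most slacks are unchanged. The crossings at $j=p+1$ and $j=r+1$ change by $\pm a$, and these are controlled by $\mu_p-a\ge\mu_{p+1}$, $\lambda_r-a\ge\lambda_{r+1}$ and \eqref{rect:eq:diagonal}.

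The inverse map adds $a$ to the same entries. Its feasibility is checked by the same slack computations read backwards, with the hypotheses used only in the forward direction. This yields a bijection at every stretch, and hence \eqref{rect:eq:identity}.
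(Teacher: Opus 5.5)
Your map (subtract $a$ at the positions $(p+i,i)$, $1\le i\le q$), the bijection at each stretch, and the inverse obtained by reading the change formulas backwards all match the paper. However, both substantive steps fail as you wrote them.

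First, the lower bound. Your proposed sharpening, $R_{p+i}(i+1)\le\nu_{q+1}$, is false. Take $\lambda=(4,3,1)$, $\mu=(3)$, $\nu=(3,2)$, $p=1$, $q=2$, $a=1$; all hypotheses hold. The unique LR array has $x_{2,1}=2$ and $x_{2,2}=1$, so $R_2(2)=1>\nu_3=0$. Labels $i+1,\dots,q$ in row $p+i$ are not controlled by $\nu_{q+1}$, so no use of the lattice inequalities can rescue this estimate. The bound has to be taken in row $r$, where \eqref{rect:eq:suffix} gives $y_{r,q}=\lambda_r-R_r(q+1)\ge\lambda_r-\nu_{q+1}\ge\mu_{p+1}+a$. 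It is then carried up the diagonal by the \emph{column} inequalities, $y_{p+i,i}\ge y_{p+i+1,i+1}\ge\cdots\ge y_{r,q}$. Together with $y_{p+i,i-1}\le\mu_{p+1}$ from \eqref{rect:eq:diagonal}, this yields $x_{p+i,i}\ge a$. This is the array form of the paper's observation that columns $\mu_{p+1}+1,\dots,\mu_{p+1}+a$ must read $1,2,\dots,q$ in rows $p+1,\dots,r$. So the diagonal map works, and no alternative path is needed.

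Second, your slack bookkeeping puts the exceptional cases in the wrong places. For $i<q$, the two changes in $L_{j,i}$ occur for exactly the same $j$, namely $j\ge p+i+1$, so they cancel and there are no boundary cases. Likewise $C_{p+1,b}$ is unchanged, because the drop of $\mu_p$ cancels the drop of $y_{p+1,b}$. The slacks that genuinely decrease by $a$ are two families:
$L_{j,q}$ for $j\ge r+1$, because label $q$ loses $a$ in row $r$ while label $q+1$ is untouched; you omit these entirely.
$C_{r+1,b}$ for $b\ge q+1$.

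Both families need the hypothesis $\lambda_r-a\ge\mu_{p+1}+\nu_{q+1}$. One shows $L_{j,q}\ge S_{r,q}-\nu_{q+1}\ge\lambda_r-\mu_{p+1}-\nu_{q+1}$. Summing the lattice inequalities in row $r+1$ and combining with \eqref{rect:eq:suffix} gives $C_{r+1,b}\ge\lambda_r-\mu_{p+1}-\nu_{q+1}$.

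The facts you cite ($\mu_p-a\ge\mu_{p+1}$, $\lambda_r-a\ge\lambda_{r+1}$, \eqref{rect:eq:diagonal}) are not enough. Take $\lambda=(3,2,1)$, $\mu=(1)$, $\nu=(3,2)$, $p=q=1$, $a=1$. Every hypothesis holds except $\lambda_2-a\ge\mu_2+\nu_2$. The valid array with $x_{1,1}=2$ and $x_{2,1}=x_{2,2}=x_{3,2}=1$ satisfies $x_{2,1}\ge a$, yet $C_{3,2}=0<a$, so its image under your map is infeasible.
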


\begin{proof}
The inequalities ensure weak decrease and nonnegativity.
The three sizes decrease by $ra$, $pa$, and $qa$;
hence balance is preserved.

Containment of $\widehat\mu$ in $\widehat\lambda$
is unchanged in rows at most $p$ and beyond $r$.
For $p<j\le r$,
\[
\lambda_j-a\ge\lambda_r-a
 \ge\mu_{p+1}+\nu_{q+1}\ge\mu_j.
\]
The containment argument for $\widehat\nu$ uses
$q<j\le r$ and $\nu_j\le\nu_{q+1}$.

Choose $n\ge r$ containing all parts, and put
\[
M=\mu_{p+1},\qquad N=\nu_{q+1}.
\]
Given an original LR array $x$, define
\begin{equation}\label{rect:eq:map}
\widehat x_{p+i,i}=x_{p+i,i}-a
\quad(1\le i\le q),
\end{equation}
leaving all other entries unchanged.

We first prove nonnegativity. By \eqref{rect:eq:suffix},
\[
y_{r,q}=\lambda_r-R_r(q+1)
 \ge\lambda_r-N\ge M+a.
\]
Every column $M+1,\ldots,M+a$ contains skew boxes
in all rows $p+1,\ldots,r$: its index exceeds the
relevant inner parts and is at most $\lambda_r$.
Its bottom entry is at most $q$.
A strictly increasing column of $q$ positive entries
whose bottom entry is at most $q$ must be
$1,2,\ldots,q$. Thus $x_{p+i,i}\ge a$ for
every $1\le i\le q$.

The new row sums are correct: rows $p+1,\ldots,r$
lose $a$ boxes, while the skew row lengths of rows
at most $p$ and beyond $r$ are unchanged.
Each label $1,\ldots,q$ loses $a$ occurrences, giving
the new content.

We calculate the changes in every inequality.
For label $i<q$, subtraction in label $i$ occurs
in row $p+i$, while subtraction in label $i+1$
occurs in row $p+i+1$. Their effects on
\eqref{rect:eq:lattice} cancel because
\[
p+i<j\quad\Longleftrightarrow\quad p+i+1\le j.
\]
Therefore
\begin{equation}\label{rect:eq:Lchange}
\widehat L_{j,i}=
\begin{cases}
L_{j,i}-a,&i=q,\ j\ge r+1,\\
L_{j,i},&\text{otherwise}.
\end{cases}
\end{equation}

For labels $i>q$ no entry of either label changes.
For column inequalities with $2\le j\le p$, both inner
boundary terms decrease by $a$ and neither row prefix changes.
For $j\ge r+2$, neither boundary term nor row prefix changes.
For the column inequalities, at $j=p+1$ the decrease
of $\mu_p$ cancels the decrease of the lower row
prefix, for every $b\ge1$.
For $p+2\le j\le r$, the modified upper entry has
label $j-p-1$ and the modified lower entry has label
$j-p$. Both enter the respective prefixes exactly
when $b\ge j-p$, so their effects cancel.
At $j=r+1$, only the upper entry of label $q$
is modified. Thus
\begin{equation}\label{rect:eq:Cchange}
\widehat C_{j,b}=
\begin{cases}
C_{j,b}-a,&j=r+1,\ b\ge q+1,\\
C_{j,b},&\text{otherwise}.
\end{cases}
\end{equation}
The exceptional cases in these two formulas are absent
if $r=n$.

Every decreased inequality has sufficient slack.
By \eqref{rect:eq:diagonal},
\[
y_{r,q-1}\le M.
\]
For $q=1$, this is the identity
$y_{r,0}=\mu_{p+1}=M$.
Hence \eqref{rect:eq:suffix} gives
\[
S_{r,q}\ge R_r(q)
 =\lambda_r-y_{r,q-1}\ge\lambda_r-M.
\]
For $j\ge r+1$,
\[
L_{j,q}
 =S_{j-1,q}-S_{j,q+1}
 \ge S_{r,q}-N
 \ge\lambda_r-M-N\ge a.
\]

If $r<n$, fix $b\ge q+1$.
Summing the lattice inequalities in row $r+1$
over $i=q+1,\ldots,b-1$ and adding
$x_{r+1,q+1}$ gives
\[
\sum_{i=q+1}^{b}x_{r+1,i}
 \le S_{r+1,q+1}-S_{r,b}.
\]
For $b=q+1$, the sum of inequalities is empty and
this relation is equality.
Combining it with \eqref{rect:eq:suffix} yields
\[
R_r(b)+\sum_{i=q+1}^{b}x_{r+1,i}
 \le S_{r+1,q+1}\le N.
\]
Also $y_{r+1,q}\le M$ by \eqref{rect:eq:diagonal}.
Consequently
\[
\begin{split}
C_{r+1,b}
 &=\lambda_r-R_r(b)
   -y_{r+1,q}-\sum_{i=q+1}^{b}x_{r+1,i}\\
 &\ge\lambda_r-M-N\ge a.
\end{split}
\]
This proves every required new inequality.

Conversely, start with a valid hatted array and add
$a$ at exactly the positions $(p+i,i)$,
$1\le i\le q$. Nonnegativity is preserved and the
original margins are restored.
The displayed change formulas are linear identities for any pair
of arrays related by the stated additions or subtractions; their
derivation did not assume feasibility. Thus equations
\eqref{rect:eq:Lchange} and \eqref{rect:eq:Cchange}, read backwards, show that every inequality is unchanged
or increased by $a$. Thus addition gives a valid
original array and is inverse to \eqref{rect:eq:map}.

We have a bijection for the original LR coefficients.
Applying it to the uniformly stretched partitions
with subtraction amount $ta$ proves equality at
every integer $t\ge1$. Polynomial uniqueness gives
\eqref{rect:eq:identity}.
\end{proof}

\section{Published reduction inputs}
\begin{theorem}[Essential Horn factorization]\label{thm:horn}
Let $n\ge2$, $|\lambda|=|\mu|+|\nu|$,
$\max(\ell(\lambda),\ell(\mu),\ell(\nu))\le n$, and
$c^\lambda_{\mu\nu}>0$.
For $1\le r<n$ and $r$-subsets $I,J,K$ of $[n]=\{1,\ldots,n\}$,
write $I=\{i_1<\cdots<i_r\}$ and
$\tau(I)=(i_r-r,\ldots,i_1-1)$, dropping zeros; define $\tau(J)$ and
$\tau(K)$ in the same way. Suppose
\[
 c^{\tau(K)}_{\tau(I),\tau(J)}=1,\qquad
 \sum_{k\in K}\lambda_k=\sum_{i\in I}\mu_i+\sum_{j\in J}\nu_j.
\]
For a subset, parts are selected in increasing index order; complements
are taken in $[n]$. Then
\[
 P^\lambda_{\mu\nu}
 =P^{\lambda_K}_{\mu_I,\nu_J}
  P^{\lambda_{K^c}}_{\mu_{I^c},\nu_{J^c}}.
\]
Both factors are balanced.
\end{theorem}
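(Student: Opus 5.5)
This is a section titled ``Published reduction inputs'', so the plan is to reduce the statement to the known factorization theorem for LR coefficients on Horn-type faces. That theorem is due to King--Tollu--Toumazet and was proved by Derksen--Weyman, and by Knutson--Tao--Woodward via hives. It says: if $c^{\tau(K)}_{\tau(I),\tau(J)}=1$ and the Horn inequality attached to $(I,J,K)$ is an equality for $(\lambda,\mu,\nu)$, then
\[
c^\lambda_{\mu\nu}=c^{\lambda_K}_{\mu_I,\nu_J}\,c^{\lambda_{K^c}}_{\mu_{I^c},\nu_{J^c}}.
\]
The polynomial identity will then follow by checking that the hypotheses survive stretching and invoking uniqueness of the interpolating polynomial.

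\textbf{Step 1: the selected sequences are valid triples.} Selecting parts in increasing index order from a weakly decreasing sequence gives a weakly decreasing sequence. Nonnegativity is inherited, so after dropping zeros each selection is a partition of length at most $r$, respectively $n-r$. Balance of the first factor is the displayed equality $\sum_K\lambda_k=\sum_I\mu_i+\sum_J\nu_j$. Balance of the second factor follows by subtracting this equality from $|\lambda|=|\mu|+|\nu|$.

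\textbf{Step 2: pass to stretches.} For each $t\ge1$ the triple $(t\lambda,t\mu,t\nu)$ is balanced and has the same length bounds. By the saturation-free direction (stretching preserves positivity), it satisfies $c^{t\lambda}_{t\mu,t\nu}>0$. The Horn equality is linear, so it holds at stretch $t$. The condition $c^{\tau(K)}_{\tau(I),\tau(J)}=1$ does not depend on $t$. Selection also commutes with stretching: $(t\lambda)_K=t\lambda_K$. Applying the cited factorization at every stretch gives
\[
P_T(t)=P^{\lambda_K}_{\mu_I,\nu_J}(t)\,P^{\lambda_{K^c}}_{\mu_{I^c},\nu_{J^c}}(t)\qquad(t\ge1),
\]
using the defining property of each factor polynomial. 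Both sides are polynomials agreeing at all positive integers, so they are equal, as remarked in the statement section.

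\textbf{Main obstacle.} The main difficulty is matching conventions exactly with the published statement. One must check that $\tau(I)$ as defined here (reversed $i_s-s$) is the partition indexing the Horn inequality there. One must also check that the published theorem requires only $c^{\tau(K)}_{\tau(I),\tau(J)}=1$ (the ``essential'' case) plus positivity, rather than a stronger regularity hypothesis. In addition, the factorization there is stated for the same ordering of parts as ``increasing index order''.

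I would first try quoting the Derksen--Weyman / King--Tollu--Toumazet formulation verbatim and translating indices. If the published version is stated in terms of $GL_n$ weights possibly having zero or negative entries, the nonnegativity in Step 1 settles the issue. If positivity of $c^\lambda_{\mu\nu}$ is not required there, that hypothesis is harmless but can still be used in Step 2. Polynomiality of each factor is supplied by the paper's standing Rassart input, since each factor is itself a balanced triple.
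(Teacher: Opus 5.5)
Your proposal matches the paper's argument. You cite the King--Tollu--Toumazet coefficient factorization; the paper uses its subset formulation in Cho--Jung--Moon, whose ordered triple $(\lambda,\mu,\nu)$ is our $(\mu,\nu,\lambda)$, whose $\pi$ is our $\tau$, and whose hypotheses are exactly positivity, $1\le r<n$, the multiplicity-one test and the boundary equality. You then check that these hypotheses persist at every stretch, apply the theorem stretchwise, conclude by polynomial uniqueness, and get balance of the factors, all as the paper does. Your justification of positivity at all stretches by the easy scaling direction (a feasible array multiplied by $t$) is the right one, whereas the paper loosely attributes this step to saturation.
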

\begin{proof}[Justification of the cited input]
We use King--Tollu--Toumazet~\cite[Theorem 1.4]{KTT09} in the exact
subset formulation recorded by Cho--Jung--Moon~\cite[Definition 1.2
and Theorem 1.3]{CJMfpsac}. Their ordered triple $(\lambda,\mu,\nu)$
is our $(\mu,\nu,\lambda)$, and their $\pi$ is our $\tau$.
Their theorem assumes positive base multiplicity, $1\le r<n$,
the multiplicity-one subset test and the displayed boundary equality,
and states both the coefficient and stretched-polynomial factorizations.
These are exactly the hypotheses above; regularity and strictness of
the other Horn inequalities are not required.
Roth~\cite[Reduction Theorem (3.1.1)]{Roth} supplies an independent
primary proof of the general representation-theoretic reduction and
identifies this type-$A$ result with KTT in his introduction. This is
supporting evidence; the operative statement here is the cited subset theorem.

Positive base multiplicity persists at every positive stretch by
saturation. The same subsets and their multiplicity-one test are fixed,
and the boundary equality is homogeneous. Apply the coefficient theorem
to each stretch, then use polynomial uniqueness. Balance of the first
factor is the boundary equality; subtract it from total balance for the
second factor.
\end{proof}
For a source triple in the box, take $n=\ell(\lambda)$.
Both outer factors have positive size strictly
smaller than $|\lambda|$. Both lengths are at most $n$, so both factors
are in the box. If neither factor had a negative monomial coefficient,
neither could their product.

\begin{theorem}[Second reduction at all stretches]\label{thm:second}
Let $\lambda,\mu,\nu$ be balanced partitions with
$\mu,\nu\subseteq\lambda$. Let $p,q\ge1$, set $r=p+q$, and choose
an integer $n\ge\max\{\ell(\lambda),\ell(\mu),\ell(\nu),r+1\}$.
Assume
\[
 \mu_p>\mu_{p+1},\qquad \nu_q>\nu_{q+1},\qquad
 \lambda_r>\lambda_{r+1},\qquad
 \mu_p+\nu_q\ge\lambda_1+\lambda_{r+1}+1.
\]
Then $\lambda^- =\lambda-(1^r)$,
$\mu^- =\mu-(1^p)$ and $\nu^- =\nu-(1^q)$ are balanced partitions,
and
\[
 \forall t\ge1,\quad
 c^{t\lambda}_{t\mu,t\nu}=c^{t\lambda^-}_{t\mu^-,t\nu^-},
 \qquad P^\lambda_{\mu\nu}=P^{\lambda^-}_{\mu^-,\nu^-}.
\]
Here $t$ is an integer. Padding to $n$ changes no coefficient.
\end{theorem}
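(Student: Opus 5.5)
The natural route is to adapt the proof of Theorem~\ref{rect:thm:rectangle}, with the column-filling argument there replaced by one that runs off the new hypothesis $\mu_p+\nu_q\ge\lambda_1+\lambda_{r+1}+1$. The subtraction pattern is the same as in Theorem~\ref{rect:thm:rectangle} with $a=1$. Rows $1,\dots,p$ keep their skew lengths. Rows $p+1,\dots,r$ each lose one skew box. Labels $1,\dots,q$ each lose one occurrence. Stretching by $t$ gives the same pattern with $a=t$. So I would work in the array notation of Lemma~\ref{lem:arrays} at stretch $t$ and use the map
\[
\widehat x_{p+i,i}=x_{p+i,i}-t\qquad(1\le i\le q),
\]
with all other entries unchanged. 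Polynomial uniqueness then upgrades the equality at every $t\ge1$ to the identity of polynomials. The statement's requirement $n\ge r+1$ is only there so that rows $r+1$ and beyond exist in the formulas; padding with zero parts changes no tableau count.

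\textbf{Step 1 (well-definedness).} The three strict-descent hypotheses make $\lambda^-,\mu^-,\nu^-$ weakly decreasing and nonnegative. The sizes drop by $r,p,q$, so balance is preserved. For containment $\mu^-\subseteq\lambda^-$, only rows $p<j\le r$ need checking, where I need $\mu_j\le\lambda_j-1$. I would get this from $\mu_j\le\mu_{p+1}<\mu_p$ together with $\mu_p\le\lambda_1+\lambda_{r+1}+1-\nu_q$. If that bound is too weak, I would fall back on Weyl-type inequalities from Lemma~\ref{rect:lem:bounds}, since positive multiplicity may be assumed (otherwise both sides vanish by saturation). The case $c^\lambda_{\mu\nu}=0$ needs a separate check that $c^{\lambda^-}_{\mu^-\nu^-}=0$ as well. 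I expect this to follow from the inverse map, since that map produces valid original arrays.

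\textbf{Step 2 (linear bookkeeping).} The change formulas \eqref{rect:eq:Lchange} and \eqref{rect:eq:Cchange} were derived as linear identities that did not use feasibility. So they apply verbatim with $a=t$. The only inequalities that decrease are therefore $L_{j,q}$ for $j\ge r+1$ and $C_{r+1,b}$ for $b\ge q+1$, each by $t$. The inverse map adds $t$ back at the same positions and only increases slacks, exactly as before.

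\textbf{Step 3 (the main obstacle).} Two facts must now be proved from the new hypothesis rather than from $\lambda_r-1\ge\mu_{p+1}+\nu_{q+1}$. First, nonnegativity: $x_{p+i,i}\ge t$. Second, sufficient slack: $L_{j,q}\ge t$ and $C_{r+1,b}\ge t$.

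For the slack bound, I would first try the same chains as in the proof of Theorem~\ref{rect:thm:rectangle}. In row $r+1$, the prefix up to label $q$ ends at column $y_{r+1,q}$, which \eqref{rect:eq:diagonal} bounds by $t\mu_{p+1}$. The labels $q+1,\dots,b$ there and the suffix $R_r(b)$ are jointly bounded by $S_{r+1,q+1}\le t\nu_{q+1}$. The new ingredient is to bound these quantities using $\lambda_1$ and $\lambda_{r+1}$ instead. Row $r+1$ has total length at most $t\lambda_{r+1}$. The upper rows have length at most $t\lambda_1$, and their inner parts are at least $t\mu_p$. The label-$q$ supply is at least $t\nu_q$. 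Combining these into
\[
C_{r+1,b},\ L_{j,q}\ \ge\ t(\mu_p+\nu_q-\lambda_1-\lambda_{r+1})\ \ge t
\]
is the inequality I would aim for. For the column inequality, this likely means rewriting $C_{r+1,b}$ through the row-$r$ endpoint $t\lambda_r$ and counting the labels at most $q$ lying above row $r+1$.

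For nonnegativity, I would argue by pigeonhole on columns of the stretched skew shape. This uses that rows $1,\dots,p$ start after column $t\mu_p$ and that all rows end by $t\lambda_1$. At most $t(\lambda_1-\mu_p)$ columns meet the top $p$ rows, while $t\nu_q$ copies of each label $i\le q$ must be placed. The surplus $t(\mu_p+\nu_q-\lambda_1)>t\lambda_{r+1}$ should force at least $t$ copies of label $i$ into row $p+i$.

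This counting step is where I expect the real difficulty. If the direct count fails, my fallback is to transport the statement through the symmetry representatives of Theorem~\ref{thm:six}. That means finding a representative in which the hypothesis becomes the condition \eqref{rect:eq:hypotheses}, applying Theorem~\ref{rect:thm:rectangle} there, and checking that the subtraction corresponds to $(1^r),(1^p),(1^q)$ on the original triple.
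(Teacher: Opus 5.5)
\textbf{Verdict.} Reusing the map of Theorem~\ref{rect:thm:rectangle} with $a=t$ is a viable route, and it differs from the paper's. Your Step~2 is correct, and you name the right target bound $t(\mu_p+\nu_q-\lambda_1-\lambda_{r+1})$. But Step~3 is the whole content of the theorem, and you leave it as an aim; the mechanisms you propose do not reach it.

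For comparison, the paper builds no bijection. It cites the Cho--Jung--Moon unit theorem at stretch one and applies it $t$ times along $t\lambda-s(1^r)$, $t\mu-s(1^p)$, $t\nu-s(1^q)$. At each step it checks that the gaps stay at least $t-s$ and that $t(\mu_p+\nu_q-\lambda_1-\lambda_{r+1})-s\ge1$, counting non-contained targets as zero.

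\textbf{Where your Step~3 mechanisms fail.}
The chains from Theorem~\ref{rect:thm:rectangle} only bound the slacks by $t(\lambda_r-\mu_{p+1}-\nu_{q+1})$, which the new hypothesis does not control. For example, $\lambda=(2,2,1,1)$, $\mu=\nu=(2,1)$, $p=q=1$ satisfies every hypothesis and has $c^\lambda_{\mu\nu}=1$, yet $\lambda_2-\mu_2-\nu_2=0$.

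Column pigeonholing only sees rows $\le p$ (columns in $(t\mu_p,t\lambda_1]$) and rows $>r$ (columns $\le t\lambda_{r+1}$). So it places $t(\mu_p+\nu_q-\lambda_1-\lambda_{r+1})$ copies of label $i$ somewhere in rows $p+1,\dots,r$, but not in row $p+i$.

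The fallback through Theorem~\ref{thm:six} is no substitute. The first three representatives only permute the roles of $\mu,\nu,\lambda^{c,L}$. With the indices carried along, \eqref{rect:eq:hypotheses} still amounts to $\lambda_r-1\ge\mu_{p+1}+\nu_{q+1}$, which is false in the example above. The three dual representatives do not turn your subtraction into a single rectangular one.

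\textbf{Step~1 is also wrong.} You use the hypothesis backwards: it bounds $\mu_p$ from below, not above. Moreover, containment $\mu^-\subseteq\lambda^-$ can fail. For $\lambda=(3,2,1,1)$, $\mu=(3,2)$, $\nu=(2)$, all hypotheses hold while $\mu^-=(2,2)\not\subseteq\lambda^-=(2,1,1,1)$. Both coefficients are then zero, and the statement only claims balanced partitions, so no containment proof is needed.

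\textbf{The missing idea.} Iterate the lattice inequality $S_{j,i+1}\le S_{j-1,i}$ at stretch $t$ to trade labels for rows.
\begin{itemize}
\item Nonnegativity. Ones in rows $\le p$ lie in distinct columns of $(t\mu_p,t\lambda_1]$, so $S_{p+i-1,i}\le S_{p,1}\le t(\lambda_1-\mu_p)$. Copies of $q$ below row $r$ lie in distinct columns $\le t\lambda_{r+1}$, so $S_{p+i,i}\ge S_{r,q}\ge t(\nu_q-\lambda_{r+1})$. Subtracting gives $x_{p+i,i}\ge t(\mu_p+\nu_q-\lambda_1-\lambda_{r+1})\ge t$.
\item Lattice slacks. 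For $j\ge r+1$, write $L_{j,q}=t\nu_q-S_{r,q+1}-E$, where $E$ counts the $q$'s in rows $\ge j$ and the $(q+1)$'s in rows $r+1,\dots,j$. Column strictness puts all of these in distinct columns $\le t\lambda_{r+1}$. Together with $S_{r,q+1}\le S_{p,1}$, the same lower bound follows.
\item Column slacks. For $b\ge q+1$, $C_{r+1,b}\ge y_{r,q}-t\lambda_{r+1}$, and $y_{r,q}\ge t\lambda_{r+1}+t$. Otherwise the $q$'s in rows $\ge r$ would fit into fewer than $t\lambda_{r+1}+t$ columns. Rows $<r$ contain at most $S_{r-1,q}\le S_{p,1}\le t(\lambda_1-\mu_p)$ of them, which contradicts $t\nu_q\ge t(\lambda_1-\mu_p+\lambda_{r+1}+1)$.
\end{itemize}

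With these three bounds your bijection works at every $t$, and non-containment occurs only when both sides vanish. You would get a self-contained proof showing that the second reduction uses the very map of the first. The paper's iteration is shorter but rests entirely on the cited unit theorem.
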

\begin{proof}
The strict gaps ensure that the subtractions give partitions; balance
is preserved because $r=p+q$. The unit coefficient theorem is
Cho--Jung--Moon~\cite[Theorem 2.6]{CJMsecond}, also proved in
\cite[Theorem 3.1]{CJMfpsac}. To match the primary rectangle statement,
use rectangle height $n$ and width $W=\lambda_1$, and take its indices
$\alpha=p$, $\beta=q$, $\gamma=n-r\ge1$.
The complementary outer part is
$\lambda^c_{\gamma}=W-\lambda_{r+1}$.
Its numerical hypothesis becomes
$\mu_p+\nu_q+W-\lambda_{r+1}\ge2W+1$, exactly our inequality.
Its third strict gap is
$\lambda^c_\gamma>\lambda^c_{\gamma+1}$, equivalent to the outer gap.
All three partitions fit this rectangle by containment. The unit theorem
therefore applies with every hypothesis explicit.

Fix $t\ge1$ and, for $0\le s\le t$, put
\[
 \lambda^{(s)}=t\lambda-s(1^r),\qquad
 \mu^{(s)}=t\mu-s(1^p),\qquad
 \nu^{(s)}=t\nu-s(1^q).
\]
For $s<t$, each of the three gaps is at least $t-s\ge1$, and
\[
 (t\mu_p-s)+(t\nu_q-s)
 -(t\lambda_1-s)-t\lambda_{r+1}
 =t(\mu_p+\nu_q-\lambda_1-\lambda_{r+1})-s\ge1.
\]
At every step where both inner partitions are contained in
$\lambda^{(s)}$, apply the unit coefficient theorem, using the current
rectangle width $\lambda^{(s)}_1$. In the coefficient formulation
\cite[Theorem 3.1]{CJMfpsac}, a noncontained target has coefficient zero
by the usual LR convention; thus this also covers a first transition
to such a target.
If containment has already failed at step $s$, it fails at every later
step: for every row $j$,
\[
 \lambda^{(s)}_j-\mu^{(s)}_j
 =t(\lambda_j-\mu_j)-s\,\mathbf1_{\{p<j\le r\}},
\]
and the analogous formula for $\nu$ has $q$ in place of $p$.
These differences are nonincreasing in $s$. Consequently both
coefficients in every subsequent step are zero. Every adjacent pair
therefore has equal coefficients, and the final triple is
$t(\lambda^-,\mu^-,\nu^-)$. Equality at every positive integer proves
the polynomial identity.
\end{proof}

\section{A single least-counterexample argument for the entire box}
Write $\mathcal D$ for the triples satisfying all the necessary
conditions above with $B=0$, together with $c^\lambda_{\mu\nu}\ge3$.
Normalize the inner order by placing the larger pair
$(|\mu|,\mu)$ first: $(|\mu|,\mu)\ge(|\nu|,\nu)$
in lexicographic order. Partition tuples are compared lexicographically
after deleting trailing zeros, or equivalently after padding both to
length seven with zeros. If the tuples agree, either inner ordering gives the same
triple. This is the normalization used by every enumeration and path check.
Split $\mathcal D=\mathcal D_{\rm small}\sqcup\mathcal D_{\rm large}$
at outer size $23$. No assertion of positivity through size $23$
is made as an input.

For a normalized triple $T$, let
\[
 k(T)=(|\lambda|,\ell(\lambda),T)
\]
with lexicographic order. The box is finite. Consequently, if it
contains a polynomial with a negative coefficient, it contains one
with least key. Every reduction in this manuscript either preserves
$P$ or replaces it by $Q$ with $P(t)=Q(gt)$, $g\ge1$.
The latter operation preserves signs because $[t^i]P=g^i[t^i]Q$.
For a factorization, at least one factor inherits a negative coefficient.

Define finite sets $\mathcal C_{\rm small}$ and
$\mathcal C_{\rm large}$ by the following explicit certificates.
These definitions do not assume that a greedy selection of moves finds
every possible reduction.
\begin{enumerate}
\item Each record of $\mathcal D_{\rm small}$ is either retained in
$\mathcal C_{\rm small}$ or supplied with a valid essential Horn
factorization, rectangular shortening, or second reduction. Each
exclusion has smaller positive outer factors or a strictly smaller
outer target in the original box.
\item Each record of $\mathcal D_{\rm large}$ has a finite path of
the proved column, division, deletion, shortening and six-representative
identities. Its final normalized triple has no larger key. If its key
is smaller, this is an exclusion. The label ``baseline'' in some data
files merely records that the final outer size is at most $23$;
this also gives a smaller key and invokes no positivity baseline.
Otherwise the path fixes the original triple.
\item Each such large fixed point is either supplied with an essential
Horn factorization into smaller in-box factors, or is carried to a
second path using rectangular shortening and the preceding identities.
Again a smaller final key is an exclusion and an equal key is a fixed point.
\item Finally, a fixed point of the second path is either shortened
by the second reduction or retained in $\mathcal C_{\rm large}$.
\end{enumerate}
For every path, the certificate checks its source, every local premise,
every exact target, its accumulated positive scale, and its final key.
Intermediate presentations may be larger than the original box;
the identities have no size restriction and the final exclusion is
required to lie in the original box. All intermediate partition lengths
are at most seven.

\begin{theorem}[Finite cover implication]\label{thm:globalcover}
Suppose the two enumerations are complete, every certificate just
specified is valid, and every polynomial attached to
$\mathcal C_{\rm small}\cup\mathcal C_{\rm large}$ has nonnegative
monomial coefficients. Then every stretched Littlewood--Richardson
polynomial in the frozen box has nonnegative monomial coefficients.
\end{theorem}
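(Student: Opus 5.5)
We argue by least counterexample with respect to the key $k(T)=(|\lambda|,\ell(\lambda),T)$. Suppose some triple in the box has a polynomial with a negative monomial coefficient. The box is finite, so among all such normalized triples choose $T=(\lambda,\mu,\nu)$ of least key. Normalizing the inner order changes nothing, since exchange of the inner partitions preserves $P$. The goal is to show that $T$ lies in $\mathcal C_{\rm small}\cup\mathcal C_{\rm large}$, which contradicts the hypothesis that every attached polynomial is nonnegative.

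The first step places $T$ in $\mathcal D$. Theorem~\ref{thm:domain} is stated for a minimum outer-size counterexample with $B=0$. A least-key counterexample has, in particular, least outer size among counterexamples, because $|\lambda|$ is the first key component. So every necessary condition of Theorem~\ref{thm:domain} holds, and $\mathcal B_0$ holds because the all-empty triple has polynomial $1$. The tabulated implications for $c^\lambda_{\mu\nu}\in\{0,1,2\}$ give polynomials $0$, $1$ and $t+1$, all nonnegative, so $c^\lambda_{\mu\nu}\ge3$. Hence $T\in\mathcal D$. Completeness of the enumerations then puts $T$ among the listed records of $\mathcal D_{\rm small}$ or $\mathcal D_{\rm large}$, according as $|\lambda|\le 23$ or not.

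The second step rules out every exclusion certificate. In each case the certificate produces either a factorization or a single final triple, and we compare it with $T$.
\begin{enumerate}
\item \emph{Factorization.} For an essential Horn factorization (Theorem~\ref{thm:horn}), both factors are in-box triples of strictly smaller positive outer size, and at least one inherits a negative coefficient. This contradicts minimality of $|\lambda|$.
\item \emph{Rectangular shortening or second reduction.} Theorem~\ref{rect:thm:rectangle} and Theorem~\ref{thm:second} give an equal polynomial at a strictly smaller outer size, again with lengths at most seven.
\item \emph{Large path ending at a smaller key.} Each step is one of Lemmas~\ref{lem:basic}, \ref{lem:emptyrow}, Theorems~\ref{thm:shortening}, \ref{thm:six}, or exchange. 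Each preserves $P$ or replaces it by $Q$ with $P(t)=Q(gt)$. Composing these, the final polynomial $Q$ satisfies $P(t)=Q(Gt)$ for the accumulated positive scale $G$. By (10), $Q$ has a negative coefficient whenever $P$ does. The final triple lies in the original box and has a smaller key, contradicting the choice of $T$. The size-$\le23$ ``baseline'' case is a special instance of this.
\end{enumerate}

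We must stress that intermediate presentations may leave the box. This does no harm: the identities carry no size restriction, and only the final triple must lie in the box. Its lengths stay at most seven, as the certificate checks.

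The third step handles fixed points. If a path returns the original key, the normalized final triple equals $T$, since the key contains $T$ itself. In that case the certificate moves to the next stage: Horn factorization, then a second path, then the second reduction. Each stage ends either in an exclusion, which is contradicted exactly as above, or in a further fixed point. The only surviving alternative at the last stage is membership in $\mathcal C_{\rm large}$. In the small branch, the only non-excluded option is retention in $\mathcal C_{\rm small}$. So $T$ lies in the cover, a contradiction.

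The main obstacle is bookkeeping rather than mathematics. We must check that ``no larger key'' together with ``fixed point means equal key'' really exhausts the cases, and that each certificate's smaller target is compared under the same normalization and padding convention. We also need the accumulated scale to be positive, so that (10) transfers sign. I would verify carefully that key equality forces triple equality after normalization, which is where the convention on partition tuples with trailing zeros matters.
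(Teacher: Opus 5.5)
Your proposal is correct and follows the paper's own argument. You take a least-key negative witness, place it in $\mathcal D$ via the base-multiplicity theorems and Theorem~\ref{thm:domain} at $B=0$, and rule out each exclusion type (Horn split, shortening, second reduction, path to a smaller key) by minimality, which forces membership in $\mathcal C_{\rm small}\cup\mathcal C_{\rm large}$. One small completion: second paths may also use rectangular shortening (Theorem~\ref{rect:thm:rectangle}), which you omitted from your path-step list, but it preserves $P$ and fits your scale-composition argument unchanged.
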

\begin{proof}
Choose a least-key negative witness $T$, if one exists.
The empty triple has polynomial $1$. The supplied theorems for base
multiplicity $0$, $1$, and $2$ give polynomials $0$, $1$, and $t+1$,
respectively. Thus $|\lambda|\ge1$ and the base multiplicity is at least three.
The necessary-domain theorem at $B=0$ applies to this minimum-size
witness. Exchange symmetry gives its normalized representative, so
$T\in\mathcal D_{\rm small}\cup\mathcal D_{\rm large}$.

In the small case an exclusion would produce a negative witness of
strictly smaller outer size, contradicting minimality. Hence
$T\in\mathcal C_{\rm small}$.
In the large case a nontrivial path ending at a smaller key would
produce a negative witness of smaller key. A Horn split would
produce a negative witness of smaller outer size. Thus neither kind
of exclusion is possible. The witness must pass through both
fixed-point stages and survive the final second-reduction filter.
Therefore $T\in\mathcal C_{\rm large}$.
In either case its polynomial is nonnegative by the stated finite
sign hypothesis, contradicting the choice of $T$.
\end{proof}

This is a finite-box assertion. It does not prove positivity for
larger partitions or for the unrestricted Littlewood--Richardson conjecture.
The computational hypothesis is to be discharged by the complete,
source-bound enumeration, reduction and polynomial certificates,
not by observed absence in a sample, a progress counter, or a
comparison of only finitely many values without a degree bound.

\section{The exact finite computation}
The three finite dependencies are named E (enumeration), R (reduction
cover), and P (residual polynomials). They concern mathematical sets and
exact rational numbers. Checksums bind their data to a particular
submission; a checksum or a progress flag is not a mathematical premise.

\subsection{E: complete enumeration}
Here is a finite enumeration independent of all reduction choices.
Let $\operatorname{Part}(N,h,M)$ consist of weakly decreasing positive tuples
of sum $N$, length at most $h$, and first part at most $M$.
The following recursion defines it without an oracle:
\[
\begin{split}
\operatorname{Part}(0,h,M)&=\{()\},\\
\operatorname{Part}(N,0,M)&=\varnothing\quad(N>0),\\
\operatorname{Part}(N,h,M)&=
 \bigcup_{a=1}^{\min(N,M)}
 \{(a,\rho):\rho\in\operatorname{Part}(N-a,h-1,a)\}
 \quad(N,h>0).
\end{split}
\]
The entries of $(a,\rho)$ are $a$ followed by those of $\rho$.
Induction on $h$ proves that this produces precisely the stated tuples,
each once: a nonempty partition determines its first part and tail
uniquely. The recursion decreases $h$ at each call.

For $N=1,\ldots,30$, enumerate
$\lambda\in\operatorname{Part}(N,7,30)$; for $m=0,\ldots,N$,
enumerate
\[
 \mu\in\operatorname{Part}(m,7,30),\qquad
 \nu\in\operatorname{Part}(N-m,7,30).
\]
Retain exactly the normalized triples satisfying the six items of
Theorem~\ref{thm:domain} with $B=0$ and $c^\lambda_{\mu\nu}\ge3$.
The base LR count can be computed by enumerating the nonnegative arrays
of Lemma~\ref{lem:arrays}, with each entry bounded by $\lambda_j-\mu_j$;
reject negative row bounds and count arrays satisfying all equations
and inequalities. Thus the filter itself has an exact terminating
definition. The implementation uses lrcalc and an independent tableau
dynamic program to accelerate these finite counts.

The actual enumerators use two equivalent accelerations: generate inner
partitions inside $\kappa$ by its coordinate ceilings, or enumerate all
partitions by integer-part dynamic programming and apply the literal
inequalities. Corollary~\ref{cor:kappa} justifies the first; the recursion
above justifies the universe of the second. No search timeout or heuristic
cutoff removes a tuple from E.

\begin{proposition}[Finite dependency E]\label{comp:E}
The supplied small and large domain lists have no duplicate normalized
triple and equal $\mathcal D_{\rm small}$ and $\mathcal D_{\rm large}$,
respectively. Their cardinalities are $37,530$ and $1,255,228$.
\end{proposition}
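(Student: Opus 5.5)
This is a finite computational assertion, so the plan is to establish it by an exact, reproducible computation organised so that completeness and correctness reduce to the finite definitions already given.

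First, generate the universe literally. Implement $\operatorname{Part}(N,h,M)$ by the displayed recursion. For each $N=1,\ldots,30$, each $\lambda\in\operatorname{Part}(N,7,30)$ and each split $m$, form all pairs $(\mu,\nu)$ and keep only the normalized ones, $(|\mu|,\mu)\ge(|\nu|,\nu)$ lexicographically. The recursion produces each tuple exactly once, and normalization selects one representative per exchange class. Hence the candidate list is duplicate-free by construction. I would still confirm this mechanically, by sorting the normalized keys and checking that adjacent keys differ.

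Second, apply the filter of Theorem~\ref{thm:domain} with $B=0$ item by item, using only integer arithmetic:
\begin{enumerate}
\item the gcd of the positive parts is one, and $\mu_n=\nu_n=0$ with $n=\ell(\lambda)$;
\item $q_k>0$ for $1\le k<n$;
\item strict containment in every row $j\le n$;
\item the interlacing bounds $\mu_k,\nu_k\le\lambda_{k+1}$;
\item the gap inequalities (16);
\item the size inequalities (17).
\end{enumerate}
Only after these cheap tests are passed, compute $c^\lambda_{\mu\nu}$ and require it to be at least $3$. The coefficient is obtained by counting the arrays of Lemma~\ref{lem:arrays} at stretch one, and I would cross-check it against lrcalc and the tableau dynamic program. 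Positivity of the base multiplicity is subsumed by $c\ge3$.

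For speed, I would generate $\mu,\nu$ inside $\kappa$ via Corollary~\ref{cor:kappa}. I would then argue that this loses nothing: every triple passing items 3 and 4 has $\mu,\nu\subseteq\kappa$, so the restricted and unrestricted enumerations coincide. I would also verify this agreement empirically on smaller sizes. Finally, split the surviving list at $|\lambda|\le 23$ versus $|\lambda|>23$ and count the two parts, expecting $37{,}530$ and $1{,}255{,}228$.

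The main obstacle is the cost of the LR coefficient in the last filter at sizes near $30$, and trusting its exactness. I would first apply all the linear filters, which are very restrictive, so that relatively few coefficient evaluations remain. I would then demand agreement between two independent counting implementations on every evaluated triple, treating any disagreement as fatal. Equality with the supplied lists is then a set comparison of normalized keys, recorded with checksums.
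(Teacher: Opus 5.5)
Your plan is essentially the paper's own: you enumerate the universe literally through the $\operatorname{Part}$ recursion with the normalized inner order, and apply the items of Theorem~\ref{thm:domain} at $B=0$ together with $c^\lambda_{\mu\nu}\ge3$, computed by exact array counts with lrcalc and the tableau dynamic program as accelerators and cross-checks, the optional $\kappa$-ceiling generation justified by Corollary~\ref{cor:kappa}, and a set-level rather than cardinality-only comparison with the supplied lists. One small correction: items 3 and 4 alone do not give $\mu,\nu\subseteq\kappa$, because $\kappa_n=0$ while item 3 only yields $\mu_n\le\lambda_n-1$ (for example $\lambda=(2)$, $\mu=\nu=(1)$), so you also need item 2 ($\mu_n=\nu_n=0$); since you impose item 2 anyway, nothing is lost.
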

The independent enumeration checks equality of sets of triples and base
counts, rather than cardinality alone. The small domain has outer sizes
$1$--$23$, and the large domain has outer sizes $24$--$30$.
These are counts of normalized necessary-domain triples, not counts of
all ordered balanced triples in the original box.

\subsection{R: local certificates and complete coverage}
Every identity edge has an explicit source triple, rule name, integer
parameters, target triple and positive integer scale. An accepted edge
must satisfy all hypotheses of the named theorem and its exact target
formula after normalization. Division has its stated scale $g$; the
other edges have scale one. Validity requires recognized rules, valid
integer parameters, ordered partitions and balance. The original general
parser removes internal zero parts and interprets an unrecognized inner
selector as the second inner partition. An independent exhaustive strict
audit validates the original raw records before normalization: every
supplied edge has valid partition order, selector and integer parameters,
and satisfies its rule and target conditions. Edge sources and targets along a
path must agree successively, and scales are multiplied. The resulting
identity is $P_{T_0}(t)=P_{T_m}(gt)$, where $g$ is that product.
The endpoint must have key at most the source key and must be in the
box; an equal key is an identical normalized triple. This also handles
nontrivial loops: $P_T(t)=P_T(gt)$ does not authorize an exclusion.

A Horn record specifies its source and $(r,I,J,K)$. The checker validates
the subset sizes and ranges, evaluates
$c^{\tau(K)}_{\tau(I),\tau(J)}=1$ by the independent finite array count,
checks the boundary equality and both exact factors, and checks that
their outer sizes are positive and smaller. In particular, an absent
Horn entry cannot cause a false exclusion. Completeness of a catalog
of all Horn inequalities is unnecessary; every used entry must pass.
The rank-indexed lists in \path{horn_essential_catalog.json} have
$3,12,41,142,521,2042$ entries for ranks $2,3,4,5,6,7$, respectively.
These are direct counts of the catalog produced by \texttt{horn\_catalog}
in \path{probe_horn_primitivity.py}: it enumerates every proper subset
size and every ordered triple $(I,J,K)$ of that size, retaining exactly
those with $c^{\tau(K)}_{\tau(I),\tau(J)}=1$.
The independent array-count reconstruction in
\path{audit_core_certificates.py} checks equality of the resulting sets.
The counts are reproducibility data from these finite algorithms,
not an appeal to a literature table or a catalog-completeness premise.

\begin{proposition}[Finite dependency R]\label{comp:R}
Every source index in E occurs exactly once in the appropriate coverage
stage; the stages define the residual sets as in
Theorem~\ref{thm:globalcover}. All local rule, path endpoint and
factorization checks pass. The counts are the following.
\[
\begin{array}{lrr}
\toprule
\text{Small domain stage}&\text{excluded}&\text{remaining}\\
\midrule
\text{Initial domain}&&37,530\\
\text{Horn factorization}&17,883&19,647\\
\text{Rectangular shortening}&6,300&13,347\\
\text{Second reduction}&932&12,415\\
\midrule
\text{Large domain stage}&\text{excluded}&\text{remaining}\\
\midrule
\text{Initial domain}&&1,255,228\\
\text{First paths with smaller endpoint key}&494,925&760,303\\
\text{Horn factorization}&200,638&559,665\\
\text{Second paths with smaller endpoint key}&196,934&362,731\\
\text{Second reduction}&16,194&346,537\\
\bottomrule
\end{array}
\]
In particular $|\mathcal C_{\rm small}|=12,415$,
$|\mathcal C_{\rm large}|=346,537$, and their disjoint union has
$358,952$ members.
\end{proposition}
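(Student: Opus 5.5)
This is a finite computational assertion, so the proof is the design of an independent checker together with its execution. The plan reads the supplied certificate files in a fixed stage order and verifies four things: partition into stages, local validity of every record, the arithmetic of the table, and the final membership of $\mathcal C_{\rm small}$ and $\mathcal C_{\rm large}$.

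\textbf{Stage partition.} Starting from the lists certified equal to $\mathcal D_{\rm small}$ and $\mathcal D_{\rm large}$ by Proposition~\ref{comp:E}, assign each source index to exactly one stage outcome. In the small domain the outcomes are Horn, rectangular shortening, second reduction, or retained. In the large domain they are first path excluded, Horn, second path excluded, second reduction, or retained. The checker confirms three things:
\begin{itemize}
\item the index sets of the outcomes are pairwise disjoint;
\item their union is the whole index range;
\item each stage acts only on the survivors of the previous stage, as required by the order in the definition preceding Theorem~\ref{thm:globalcover}.
\end{itemize}
The counts are then read off and checked for consistency. For example, $37{,}530-17{,}883=19{,}647$, and $1{,}255{,}228-494{,}925=760{,}303$; the remaining rows are checked the same way. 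The final line is $12{,}415+346{,}537=358{,}952$. The union is disjoint because the two domains have disjoint outer-size ranges, $1$--$23$ and $24$--$30$.

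\textbf{Local checks.} Each record is re-verified directly against the hypotheses of its named theorem, never against the generator's claims.
\begin{itemize}
\item \emph{Rectangular shortening.} Check the inequalities \eqref{rect:eq:hypotheses} and the exact target of Theorem~\ref{rect:thm:rectangle}.
\item \emph{Second reduction.} Check the strict gaps, the numerical inequality, and $n\ge r+1$ from Theorem~\ref{thm:second}.
\item \emph{Horn records.} Check the subset ranges, the boundary equality, and $c^{\tau(K)}_{\tau(I),\tau(J)}=1$ by the finite array count of Lemma~\ref{lem:arrays}. Also check that both factors are balanced and have positive, strictly smaller outer size.
\item \emph{Path edges.} Check the premises of the cited lemma or theorem: Lemma~\ref{lem:basic}, Lemma~\ref{lem:emptyrow}, Theorem~\ref{thm:shortening}, Theorem~\ref{thm:six} (with $c^\lambda_{\mu\nu}>0$ and zero last parts), or Theorem~\ref{rect:thm:rectangle}. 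Also check that the computed target, after normalization, equals the recorded one.
\item \emph{Path chaining.} Confirm that consecutive edges chain, multiply the scales, and check that the endpoint lies in the box. Compare endpoint keys $k(T)$ lexicographically. Count a path as an exclusion only if the endpoint key is strictly smaller. For a fixed point, confirm that the endpoint normalizes to the identical triple.
\end{itemize}

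\textbf{Main obstacle.} The hardest part is guaranteeing that no check silently relies on the generator. The risks are normalization conventions, the interpretation of inner selectors, dropped internal zeros, and loops with equal keys. I would handle these by running the strict raw audit before normalization. In that audit, rejecting a record is always sound, because a failed exclusion merely leaves its triple in the residual set. It can therefore only enlarge $\mathcal C$ and can never produce a false exclusion. The proposition then amounts to the observed outcome that every supplied exclusion passed, and that the resulting residual counts match the table exactly.
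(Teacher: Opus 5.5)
Your proposal matches the paper's treatment of dependency R: an exhaustive exact audit that checks the certificates against the named theorems. As in the paper, you check each edge, path and Horn record against the hypotheses and exact target formulas. You include the strict raw-record audit before normalization, strict key comparison with equal keys forced to be identical fixed points, and the independent array count for the multiplicity-one Horn test. You then confirm that every source index from E lands in exactly one stage and read off the tabulated counts. As you note, the proposition is ultimately the recorded outcome of that executed computation, which is exactly how the paper rests it.
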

The algorithms producing paths may be greedy. Proposition~\ref{comp:R}
requires only the validity and complete accounting of the recorded paths;
it assumes nothing about discovery of every available reduction.

\subsection{P: the polynomial computation and its lattice}
For precision we specify the rational polytope used in P. Fix rank
$n\ge2$ and a positive-base triple $T$. Its hive has coordinates
$h(a,b)$ for integers $a,b\ge0$, $a+b\le n$, with boundary
\[
 h(a,0)=\sum_{j\le a}\mu_j,\qquad
 h(n-b,b)=|\mu|+\sum_{i\le b}\nu_i,\qquad
 h(0,b)=\sum_{j\le b}\lambda_j.
\]
Its three rhombus inequalities, whenever all displayed vertices exist,
are
\begin{align*}
 h(a+1,b)+h(a,b+1)-h(a,b)-h(a+1,b+1)&\ge0,\\
 h(a,b)+h(a+1,b)-h(a,b+1)-h(a+1,b-1)&\ge0,\\
 h(a,b)+h(a,b+1)-h(a+1,b)-h(a-1,b+1)&\ge0.
\end{align*}
Eliminate the fixed boundary coordinates. The resulting polytope
$H_T\subseteq\mathbb R^D$ uses the full lattice $\mathbb Z^D$, where
$D=(n-1)(n-2)/2$, with free vertices ordered by increasing $a$, then $b$.
Each inequality is stored as $(b_0,a_1,\ldots,a_D)$, meaning
$b_0+\sum a_i z_i\ge0$. This fixes signs, boundary placement and lattice.

\begin{lemma}[Literal array--hive correspondence]\label{lem:hivebinding}
For every integer $t\ge1$,
\[
 \#(tH_T\cap\mathbb Z^D)=c^{t\lambda}_{t\mu,t\nu}.
\]
\end{lemma}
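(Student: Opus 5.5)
The plan is to prove the lemma through an explicit integral bijection between integer points of $tH_T$ and the stretched LR arrays of Lemma~\ref{lem:arrays}, which are counted by $c^{t\lambda}_{t\mu,t\nu}$. First, $tH_T$ is the polytope obtained from the same rhombus inequalities with the boundary data $(\lambda,\mu,\nu)$ replaced by $(t\lambda,t\mu,t\nu)$. Each stored constant $b_0$ is a linear combination of the boundary values, and these values are linear in the partitions, so scaling by $t$ scales only the $b_0$. It therefore suffices to treat $t=1$ for an arbitrary positive-base triple, read at stretch $t$. We may also assume $\mu\subseteq\lambda$. If containment fails, the boundary rhombi already force emptiness, and the LR coefficient is zero by convention. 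This case should be checked directly.

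The map I would use sends an array $x$ to
\[
 h(a,b)=\sum_{j\le a+b}\mu_j+\sum_{j\le a+b}\sum_{i\le b}x_{j,i}
 =\sum_{j\le a+b} y_{j,b},
\]
with the notation $y_{j,b}$ of the array section (and $y_{j,0}=\mu_j$). The boundary values check out as follows.
\begin{itemize}
\item At $b=0$ we get $\sum_{j\le a}\mu_j$.
\item At $a=0$ we get $\sum_{j\le b}\bigl(\mu_j+\sum_{i\le b}x_{j,i}\bigr)=\sum_{j\le b}\lambda_j$. Here the support condition \eqref{eq:support} makes $\sum_{i\le b}x_{j,i}$ the full row sum for $j\le b$, which equals $\lambda_j-\mu_j$.
\item At $a+b=n$ we get $|\mu|+\sum_{i\le b}S_{n,i}=|\mu|+\sum_{i\le b}\nu_i$.
\end{itemize}
So the boundary is correct. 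The inverse recovers $x_{j,i}$ as a mixed second difference of $h$ in the coordinates $(j,b)=(a+b,b)$, namely
\[
 x_{j,i}=\bigl(h(j-i,i)-h(j-1-i,i)\bigr)-\bigl(h(j-i+1,i-1)-h(j-i,i-1)\bigr).
\]
Both maps are integral and affine, so integer points correspond to integer arrays and the map is a bijection onto its image in $\mathbb Z^D$. I would also confirm that entries $x_{j,i}$ with $i>j$ never appear, because the coordinate $a=j-i$ would be negative. This makes support automatic, and the margins are encoded by the fixed boundary.

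The core step is a direct linear computation showing that the three rhombus families become exactly the array inequalities.
\begin{itemize}
\item One family (I expect the first) should give $x_{j,i}\ge0$.
\item The second should give the lattice slacks $L_{j,i}$ of \eqref{rect:eq:lattice}, written as differences of the partial sums $S$.
\item The third should give the column slacks $C_{j,b}=y_{j-1,b-1}-y_{j,b}$ of \eqref{rect:eq:columns}.
\end{itemize}
For each family I would expand the four hive values as sums of $y$'s, cancel the common prefixes, and identify the result. Rhombi touching the boundary must be handled separately: they produce the inequalities with $j=1$, with $b=0$ terms, or with $b\ge j$. Some of these are trivial, and some reproduce instances of \eqref{eq:columns} with $b\ge j$, where both partial sums are full row sums and the inequality reduces to $\lambda_{j-1}\ge\lambda_j$.

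I expect the main obstacle to be this exact matching of index ranges. Every rhombus whose vertices all exist must correspond to an array inequality that holds, or one implied by the margins. Conversely, every array inequality must be implied by rhombi, since a single unmatched inequality would break the count. If the first guess for which family gives which inequality fails, I would try the transposed convention $h(a,b)=\sum_{j\le a+b}y_{j,a}$ and re-derive. As a fallback, I would verify the ranges on $n=2,3$ and then generalize. After the matching is done, the two feasible sets correspond bijectively at every stretch, and Lemma~\ref{lem:arrays} gives the stated equality.
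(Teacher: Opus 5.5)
Your map is exactly the paper's. Your $h(a,b)=\sum_{j\le a+b}y_{j,b}$ is its $F(b,a+b)$ with $F(b,j)=M_j+\sum_{r\le j,\,k\le b}x_{r,k}$. Your boundary checks, the homogeneity reduction to $t=1$, and the mixed-second-difference inverse also match. So the route is the same. The problem is that the step you call the main obstacle is left undone, and your predicted outcome is wrong in one place that matters.

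Expanding with $s=a+b$, the three rhombus slacks in the displayed order are $C_{s+2,b+1}$, $L_{s+1,b}$ and $x_{s+1,b+1}$. So your first and third families are swapped, and no transposed convention is needed. The third rhombus needs the vertex $h(a-1,b+1)$, hence $a\ge1$. It therefore yields $x_{j,i}\ge0$ only for $i<j$.

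The diagonal entries $x_{j,j}$ are not the slack of any rhombus. An array with some $x_{j,j}<0$ is not an LR tableau, so without an extra argument the hive polytope might contain integer points with no tableau, and the count would fail. The missing idea is that diagonal nonnegativity is implied by other rhombi:
\begin{itemize}
\item on the edge $a=0$ the lattice rhombi give $L_{r+1,r}=S_{r,r}-S_{r+1,r+1}=x_{r,r}-x_{r+1,r+1}\ge0$, by triangular support;
\item the content margin gives $x_{n,n}=S_{n,n}=\nu_n\ge0$;
\item hence $x_{r,r}\ge x_{r+1,r+1}\ge\cdots\ge\nu_n\ge0$.
\end{itemize}

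The same diagonal issue appears in your inverse: for $i=j$ your formula calls the nonexistent vertex $h(-1,j)$. You need the extension $F(b,j)=h(j-\min(b,j),\min(b,j))$, so that $F(b,j)=h(0,j)$ for $b\ge j$. Telescoping then recovers both margins and the diagonal entries.

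With these two additions your plan becomes the paper's proof, together with the boundary cases you did anticipate: lattice slacks with $j\le i$ vanish identically, and column slacks with $b\ge j$ equal $\lambda_{j-1}-\lambda_j$. Your preliminary noncontainment case is unnecessary, since $H_T$ is only defined for positive-base $T$.
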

\begin{proof}
We give the coordinate correspondence, consistent with
Buch~\cite[Theorem 1 and Fulton's appendix]{Buch}. At stretch one put
$M_j=\sum_{r\le j}\mu_r$ and
\[
 F(b,j)=M_j+\sum_{r\le j,\,k\le b}x_{r,k},\qquad
 h(a,b)=F(b,a+b).
\]
The array margins and triangular support give exactly the three
boundaries above. Conversely extend
$F(b,j)=h(j-\min(b,j),\min(b,j))$ and set
\[
 x_{r,k}=F(k,r)-F(k-1,r)-F(k,r-1)+F(k-1,r-1).
\]
Telescoping gives the row and content sums and makes the maps inverse.
For $s=a+b$, the three rhombus slacks in the displayed order become
$C_{s+2,b+1}$, $L_{s+1,b}$, and $x_{s+1,b+1}$.
Thus the nontrivial column and lattice inequalities and off-diagonal
nonnegativity match. The omitted lattice cases vanish by triangular
support; the remaining column cases are outer-part inequalities.
Diagonal nonnegativity follows from $x_{n,n}=\nu_n\ge0$ and
$L_{r+1,r}=x_{r,r}-x_{r+1,r+1}\ge0$. These arguments work over
$\mathbb R$ and preserve integer coordinates in both directions.
The array polytope is bounded by its nonnegative row sums, so $H_T$ is
bounded. All constants and maps are homogeneous in the boundary parts;
replacing $T$ by $tT$ replaces $H_T$ by $tH_T$ for $t>0$.
Lemma~\ref{lem:arrays} proves the count.
\end{proof}

The computation of a full polynomial has two exact routes.
The primary route is Normaliz 3.11.1 on these integer inhomogeneous
inequalities, writing each row as $(a_1,\ldots,a_D,b_0)$ for
Normaliz, with the task \texttt{EhrhartSeries}; no sublattice or
rescaled grading is supplied. The mathematical algorithm homogenizes
the rational polytope, triangulates the resulting rational cone and
counts full-lattice residue classes in simplicial cones. A disjoint
half-open decomposition avoids counting shared faces twice. Its rational
generating series can then be converted to the Ehrhart quasipolynomial.
If a constituent has denominator $q>0$, its displayed coefficient vector
$(a_0,\ldots,a_d)$ is read as $\sum_{i=0}^d(a_i/q)t^i$.
For these hives polynomiality proves that every quasipolynomial
constituent agrees with $P_T$: each agrees with $P_T$ at infinitely many
positive integers. The verifier requires a polynomial output, rejects a
reported nontrivial period, and checks the degree and dimension bounds.
The algorithm and its exact arithmetic implementation are documented
in the Normaliz manual~\cite{Normaliz}.

The independent route counts $tH_T\cap\mathbb Z^D$ with LattE for
$t=0,\ldots,D+2$ and checks agreement with the reported rational
polynomial. Since $T$ has positive base count, $H_T$ is nonempty and
the count at zero is $1$. Agreement at any $D+1$ of these arguments,
together with degree at most $D$, identifies the polynomial; the
additional arguments are checks. This route is complete for every
residual triple of rank at most five. At higher ranks, the existing
dataset uses Normaliz as the full-polynomial engine; separate lrcalc
checks at $t=1$ and selected $t=2,3$ values test it but do not identify
a polynomial of degree as large as fifteen.

\begin{proposition}[Finite dependency P]\label{comp:P}
For each $T\in\mathcal C_{\rm small}\cup\mathcal C_{\rm large}$,
the supplied record contains a rational coefficient vector
$(a_{T,0},\ldots,a_{T,d_T})$, $d_T\le15$, which the specified exact
Ehrhart computation identifies with $P_T$. Every entry is nonnegative.
There is one accepted polynomial for every residual triple and no
unresolved or mismatched source. Consequently
\[
 \forall T\in\mathcal C_{\rm small}\cup\mathcal C_{\rm large},\quad
 \forall t\in\mathbb Z_{\ge1},\quad
 c^{t\lambda}_{t\mu,t\nu}=\sum_{i=0}^{d_T}a_{T,i}t^i,
 \qquad a_{T,i}\ge0.
\]
\end{proposition}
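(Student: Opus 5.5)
Proposition~\ref{comp:P} is a finite computational assertion, so the argument has two parts: a mathematical reduction showing that the computed objects are the polynomials $P_T$, and an account of how the exact computation checks the finitely many residual records. For the first part, fix $T\in\mathcal C_{\rm small}\cup\mathcal C_{\rm large}$. Membership in $\mathcal D$ gives base multiplicity at least three, and the rank is $n=\ell(\lambda)\le7$. If $n\le1$ the polynomial is constant; otherwise Lemma~\ref{lem:hivebinding} applies and gives
\[
\#(tH_T\cap\mathbb Z^D)=c^{t\lambda}_{t\mu,t\nu}=P_T(t)\qquad(t\ge1),
\]
with $D=\binom{n-1}{2}\le15$. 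This identifies the Ehrhart quasipolynomial of the rational polytope $H_T$, on the full lattice $\mathbb Z^D$, with $P_T$ at all positive integers. Every constituent of that quasipolynomial agrees with $P_T$ on an infinite arithmetic progression, so every constituent equals $P_T$. Hence a Normaliz \texttt{EhrhartSeries} output of period one, read as the rational vector $(a_{T,i})$ after dividing by the common denominator, is exactly $P_T$. The degree bound $d_T\le D\le15$ comes from Rassart's bound or from $\dim H_T\le D$.

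For the second part, I would have the verifier carry out four checks on each residual record. First, it rebuilds the inequality system $(b_0,a_1,\dots,a_D)$ directly from $T$, using the hive boundary and rhombus formulas, and confirms that this is the exact input handed to Normaliz, with no sublattice or grading. Second, it confirms that the output has period one and degree at most $D$, and that the constant term equals $1$; the latter is forced by the Ehrhart interpretation together with positive base multiplicity. Third, it evaluates the polynomial at $t=1$ and compares with the independently computed base count from E. Fourth, for ranks at most five, it compares the polynomial with LattE point counts at $t=0,\dots,D+2$; agreement at $D+1$ of these points already determines the polynomial independently of Normaliz. The final sign check is then a comparison of exact rationals, $a_{T,i}\ge0$. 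Coverage requires one further check: the index set of accepted polynomial records must equal the residual index set produced by R, with no record unresolved or mismatched.

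The main obstacle is the higher-rank residuals. Ranks six and seven have degree up to $10$ and $15$, and for these no independent full-polynomial route exists in the dataset. Correctness there rests on trusting Normaliz's exact triangulation and half-open counting, which is exactly the trust assumption stated in the computational convention. I would first try to reduce this dependence by also evaluating the lattice-point count at $D+1$ stretches using the array enumeration of Lemma~\ref{lem:arrays}, or lrcalc, at $t=1,\dots,D+1$, which would identify $P_T$ by interpolation. If that is infeasible at stretch sixteen for rank seven, the proof will record Normaliz as the named trusted dependency P and use the partial lrcalc checks only as corroboration.
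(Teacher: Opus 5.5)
Your proposal matches the paper's treatment of this proposition. Lemma~\ref{lem:hivebinding} together with polynomiality forces every Ehrhart quasipolynomial constituent of $H_T$ to equal $P_T$; the verifier then rejects nontrivial periods and checks the degree bound, base counts, complete residual coverage and coefficient signs; LattE interpolation at $t=0,\ldots,D+2$ independently identifies the polynomial for ranks at most five; and ranks six and seven rest on Normaliz as the explicitly stated trust boundary. Your suggested lrcalc interpolation at $D+1$ stretches is not carried out in the paper, which records only $t=1$ and selected $t=2,3$ checks, so your fallback of naming Normaliz as the trusted dependency is exactly the paper's position.
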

The polynomial-record audit checks source triples and residual membership,
parses all fractions with positive denominators, evaluates all recorded
values by rational arithmetic, recomputes all base counts, and tests every
coefficient. It checks complete source coverage and rejects incompatible
duplicate results. A historical failed computation remains visible and
is superseded only by the separately identified successful result for
the same triple and verifier source. Missing data, an interrupted call,
a timeout or an engine error never establishes P.

The recorded polynomial audit has the following scope. The degree is
the degree in the ordinary stretching variable.
\[
\begin{array}{lrr}
\toprule
&\mathcal C_{\rm small}&\mathcal C_{\rm large}\\
\midrule
\text{Residual members accounted for}&12,415&346,537\\
\text{Negative monomial coefficients}&0&0\\
\text{Unresolved residual computations}&0&0\\
\text{Degree range}&2\text{--}10&1\text{--}13\\
\text{Complete independent LattE reproduction}&2,984&20,824\\
\text{Normaliz as sole full-polynomial engine}&9,431&325,713\\
\bottomrule
\end{array}
\]
The journals also contain nonresidual records; their presence does not
inflate the residual cardinality. The small journal has $13,139$ records
in $13,383,091$ bytes, and the large journal has $509,106$ records in
$544,824,662$ bytes. The complete-prefix hashes and residual-index hashes
below bind the data used in E, R and P. In each hash the two lines are
concatenated without a space.
\begin{center}
\small
\begin{tabular}{ll}
\toprule
Artifact&SHA-256\\
\midrule
Small residual indices&\shortstack[l]{\texttt{52b87cdec5f5f2fa61eba8af84484d5685}\\
\texttt{081625120730584b3c54f4d984b942}}\\
Large residual indices&\shortstack[l]{\texttt{8db1ff0e08c602b3d0cab8fdcb5a1c5ef}\\
\texttt{33743fade2975e2a2337419cbcd31c0}}\\
Small polynomial journal&\shortstack[l]{\texttt{5f9fca8c6847a35cafb09d49499e99e13f}\\
\texttt{aa4391018632288d2ea29513ffca67}}\\
Large polynomial journal&\shortstack[l]{\texttt{ec8fa3340c55a12da8ed8c4f822aa0129}\\
\texttt{030690e5ddd7d69c4f5b286bbb8a264}}\\
Large error-resolution record&\shortstack[l]{\texttt{1b7ec4392a6a05cac4ad6181c960f160d}\\
\texttt{64a09cb26008ce1a7335968161752a9}}\\
\bottomrule
\end{tabular}
\end{center}
The last record resolves large source index $1,104,809$, retaining the
historical failure. It is included in the accepted residual count.

\subsection{Complete evidence and fresh replay}
The complete original evidence manifest binds 53 files, including the
exact verifier, both full journals, every reduction path and both residual
index sets. Its SHA-256 is the concatenation
\begin{center}
\texttt{ba391e5b641153601826a22056300533b}\\
\texttt{1c74fbb12eec0084150c888f789cff4}.
\end{center}
A fresh seven-stage replay on 6 September 2026 verified all of those hashes,
reproduced both complete domains and the local reduction cover, and checked
all $358,952$ residual polynomial records with freshly recomputed LR base
counts. Every required comparison passed; elapsed time was $110.696$ seconds.
The replay receipt SHA-256 is the concatenation
\begin{center}
\texttt{dbda7ada1e15068bb5828da8ea0a37e3}\\
\texttt{71d233fb77e65c4d9e68bf379e98152a}.
\end{center}
The accompanying bundle maps these records to E, R and P in
\texttt{EVIDENCE.md}. From its extracted root, the command
\texttt{python3 verify.py --output verification.json} checks the bundle
manifest and repeats the complete finite audit. The separate resumable
regeneration command in \texttt{REPRODUCE.md} reconstructs the exact hive
inputs and reruns the Ehrhart algorithm, comparing its rational coefficients
with each archived source record. Neither a subset regeneration nor the
fast audit is described as a new exhaustive Ehrhart computation.

An independent literal-row check agreed with the source on the entire
balanced-boundary vector space at each rank $1$ through $7$: both row
builders are linear in the boundary, and they agree on a basis and zero.
Actual backend controls checked a negative-coordinate interval, a
lower-dimensional segment in the full ambient lattice, an empty polytope,
two nontrivial quasiperiods, and a simplex with a negative linear Ehrhart
coefficient. These controls test the interpretation and parser; they do
not replace the complete residual computations.

Most original temporary Normaliz input/output files were removed by the
producing program. The durable journals retain the source triples, exact
coefficient vectors and checks; the inputs are reconstructed by the frozen
source. The executable and linked-library census is a later measured
snapshot, not an attestation captured before each historical call.
These are exact-algorithm execution records with a reproducible regeneration
route, rather than an independently checked cone certificate for every case.

\subsection{Reproducibility and the trust boundary}
The accompanying \texttt{epoch\_stretched\_lr\_full\_proof.zip}
contains the complete domain lists, every
coverage record, all residual indices and coefficient records, the exact
source constructing the inequalities, the audit programs, the producing
programs, and a manifest of file and executable identities. A replay
receipt must specify which artifacts were rechecked and which computations
were rerun. The manifest maps these artifacts to E, R and P and includes
the historical-error resolution. File names are relative to the bundle;
no author's local directory is part of the proof.

Replaying the finite audits establishes their explicit integer and
rational assertions, conditional on correct execution of those programs.
In particular, auditing saved coefficients and a few LR values does not
replace the full Ehrhart calculation. At ranks six and seven that
calculation relies on the pinned Normaliz implementation. The existing
records do not contain independently checked cone decompositions for
every residual member, and they are not a proof of Normaliz in a proof
assistant. This is the explicit conventional computer-assisted trust
boundary. Removing it requires a verified implementation or complete
checked decomposition certificates; no such verification is asserted here.

\begin{theorem}[Computed finite-box positivity]\label{thm:box}
For every balanced triple
$T=(\lambda,\mu,\nu)$ of partitions satisfying
\[
 \max\{\ell(\lambda),\ell(\mu),\ell(\nu)\}\le7,
 \qquad\max\{|\lambda|,|\mu|,|\nu|\}\le30,
\]
every coefficient of $P^\lambda_{\mu\nu}$ in the monomial basis is
nonnegative. Thus no triple requested by the bounded problem exists.
\end{theorem}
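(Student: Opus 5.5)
The plan is to derive Theorem~\ref{thm:box} directly from the finite cover implication, Theorem~\ref{thm:globalcover}, by discharging its three hypotheses with the finite dependencies E, R and P.

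First, I will check that the box in Theorem~\ref{thm:box} is the frozen box of Theorem~\ref{thm:globalcover}. Balance gives $|\mu|,|\nu|\le|\lambda|$. So the condition $\max\{|\lambda|,|\mu|,|\nu|\}\le30$ is equivalent to $|\lambda|\le30$, and the length conditions agree verbatim.

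Second, I will verify the hypotheses of Theorem~\ref{thm:globalcover} one at a time. Completeness of the two enumerations is Proposition~\ref{comp:E}. The supplied lists equal $\mathcal D_{\rm small}$ and $\mathcal D_{\rm large}$ as sets, not merely in cardinality, where these sets are defined by the necessary conditions of Theorem~\ref{thm:domain} at $B=0$ together with $c^\lambda_{\mu\nu}\ge3$. Validity of every certificate is Proposition~\ref{comp:R}. This covers three things:
\begin{itemize}
\item each local edge satisfies the hypotheses of the named identity (Theorem~\ref{thm:shortening}, Lemmas~\ref{lem:emptyrow} and~\ref{lem:basic}, Theorems~\ref{thm:six}, \ref{rect:thm:rectangle}, \ref{thm:horn} and~\ref{thm:second});
\item path endpoints have key at most the source key and lie in the box;
\item each source index is accounted for exactly once, so the residual sets are exactly $\mathcal C_{\rm small}$ and $\mathcal C_{\rm large}$.
\end{itemize}
Nonnegativity of the residual polynomials is Proposition~\ref{comp:P}. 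There, Lemma~\ref{lem:hivebinding} identifies the Ehrhart count of $tH_T$ with $c^{t\lambda}_{t\mu,t\nu}$ for every $t\ge1$. Polynomial uniqueness, which holds because agreement at infinitely many positive integers forces equality, then identifies the recorded coefficient vector with $P_T$.

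Third, I will apply Theorem~\ref{thm:globalcover} and read off the conclusion. Every $P^\lambda_{\mu\nu}$ in the box then has nonnegative monomial coefficients, so the bounded problem has no witness. Two points need a remark here.
\begin{itemize}
\item Triples with $\mu\not\subseteq\lambda$, or with base multiplicity $0$, $1$ or $2$, are already handled inside the proof of Theorem~\ref{thm:globalcover} through the table of implications, with polynomials $0$, $1$ and $t+1$.
\item Theorem~\ref{thm:box} quantifies over all stretches, but $P_T$ is a single polynomial, so no extra work is needed.
\end{itemize}

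The mathematical implication is therefore routine. The real obstacle is the status of the hypotheses: E, R and P are computational assertions, and P rests at ranks six and seven on the Normaliz execution. I will therefore state explicitly that the proof is conditional on this trust boundary. It is complete only in the conventional computer-assisted sense, with E and R replayable by the audit and P by regeneration.
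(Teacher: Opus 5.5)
Your proposal is correct and matches the paper's proof. The paper likewise obtains Theorem~\ref{thm:box} by feeding the finite dependencies E, R and P into Theorem~\ref{thm:globalcover}, conditional on the stated Normaliz trust boundary. Your extra remark, that balance reduces the size condition to $|\lambda|\le30$, is harmless and is already noted in Section~1.
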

\begin{proof}
The complete exact computation, with the stated software trust boundary,
establishes E, R and P. E gives complete enumerations of the necessary
domain. R gives a valid
exclusion or residual membership for every enumerated triple. P gives
nonnegative polynomials for all residual members. These are exactly the
three premises of Theorem~\ref{thm:globalcover}.
\end{proof}

The supplementary mathematical appendix has readable-manifest SHA256
\begin{center}\small\texttt{0c0cf5e476155d11228259ae515827179023f01e3aede851fd2bb6d35e663465}\end{center}
It additionally supplies a separate literal-condition enumeration, agreeing
on all $1{,}292{,}758$ domain triples and base counts after $1{,}587{,}401$
fresh LR evaluations. Stratified reduction checks test $193$ cases and all
$579$ identities at $t=1,2,3$, including the final second-reduction exclusions
and paths with out-of-box intermediate triples. These are supplementary
execution checks; the all-stretch identities are proved above. The appendix
contains the complete mathematical data and source programs and a restricted
reader through which a referee can inspect them and run the finite audit.

\section*{Data and code availability}
The complete evidence and source programs are publicly available in the
repository~\cite{Evidence}. The version 1.0.2 evidence used here is fixed by
the repository snapshot
\begin{center}\small
\texttt{9ba79f584fe5e4d2c59fe5ce92507969152df555}.
\end{center}
The repository contains the original reviewed proof packet, all domain and
coverage records, the residual polynomials, and the programs to reproduce
the finite audit. The original archive named above is reconstructed from
\path{evidence/parts/} by \path{reproduce.py}; the repository's
\path{UPLOAD_AND_REPRODUCE.md} gives the commands.

The same release additionally contains a complete fresh regeneration of
all $358,952$ residual polynomials, with all $2,745,084$ rational coefficient
entries matching the archived records. The retained fresh engine inputs,
outputs and execution receipts are reconstructed from
\path{fresh-evidence/parts/} by \path{unpack_fresh_evidence.py}.
The release's \path{audit/FRESH_EVIDENCE.md} describes the corresponding
portable validator. This fresh corpus supplements the historical execution
records discussed above; it uses the same exact Ehrhart engine and does
not change the stated software trust boundary.

\section*{Acknowledgments}
Generative AI tools made substantive contributions to the research process.
They assisted with mathematical exploration and the development and checking
of arguments; the design, implementation and review of the computational
verification workflow; the analysis of computational results; and the
organization, editing and LaTeX preparation of the manuscript. AI-generated
outputs were not accepted as evidence on their own. The claims in this paper
rest on the mathematical arguments presented here and the released
computational evidence, subject to the stated software trust assumptions.
The author directed the research workflow, made the final research decisions
and accepts full responsibility for the claims in this paper.

Published mathematical inputs and third-party software are credited in the
bibliography and in the repository notices.

The paper and original documentation are licensed under CC BY 4.0 for
rights held in the original contributions. Original project software in
the separate repository is licensed under GPL version 3 or later;
third-party material retains its existing terms.

\begingroup
\small
\raggedright
\bibliographystyle{unsrt}
\bibliography{references}

@misc{Epoch,
  author = {{Epoch AI}},
  title = {A Negative Coefficient in a Stretched {Littlewood--Richardson} Polynomial},
  howpublished = {FrontierMath: Open Problems},
  year = {2026},
  note = {Accessed 6 September 2026. \url{https://epoch.ai/frontiermath/open-problems/stretched-lr-coefficients}}
}

@article{Rassart,
  author = {Rassart, E.},
  title = {A polynomiality property for {Littlewood--Richardson} coefficients},
  journal = {Journal of Combinatorial Theory, Series A},
  volume = {107},
  year = {2004},
  pages = {161--179},
  note = {Corollary 4.2 in the cited author version. \url{https://pi.math.cornell.edu/~rassart/pub/LRstretch.pdf}}
}

@article{KT,
  author = {Knutson, Allen and Tao, Terence},
  title = {The honeycomb model of {$GL_n(\mathbb C)$} tensor products {I}: Proof of the saturation conjecture},
  journal = {Journal of the American Mathematical Society},
  volume = {12},
  year = {1999},
  pages = {1055--1090},
  eprint = {math/9807160},
  archivePrefix = {arXiv},
  note = {\url{https://arxiv.org/abs/math/9807160}}
}

@article{KTW,
  author = {Knutson, Allen and Tao, Terence and Woodward, Christopher},
  title = {The honeycomb model of {$GL_n(\mathbb C)$} tensor products {II}: Puzzles determine facets of the {Littlewood--Richardson} cone},
  journal = {Journal of the American Mathematical Society},
  volume = {17},
  year = {2004},
  pages = {19--48},
  eprint = {math/0107011},
  archivePrefix = {arXiv},
  note = {Section 6.1. \url{https://arxiv.org/abs/math/0107011}}
}

@misc{Ikenmeyer,
  author = {Ikenmeyer, Christian},
  title = {Small {Littlewood--Richardson} coefficients},
  year = {2012},
  eprint = {1209.1521},
  archivePrefix = {arXiv},
  howpublished = {arXiv:1209.1521},
  note = {Theorem 1.1. \url{https://arxiv.org/abs/1209.1521}}
}

@article{Buch,
  author = {Buch, A. S.},
  title = {The saturation conjecture (after {A. Knutson and T. Tao})},
  journal = {L'Enseignement Math\'ematique},
  series = {2},
  volume = {46},
  year = {2000},
  pages = {43--60},
  note = {With an appendix by W. Fulton. \url{https://sites.math.rutgers.edu/~asbuch/papers/sat.pdf}}
}

@manual{Milne,
  author = {Milne, J. S.},
  title = {Reductive Groups},
  edition = {Version 2.00},
  year = {2018},
  note = {10 March 2018, 17.14 and 20.35. \url{https://www.jmilne.org/math/CourseNotes/RG.pdf}}
}

@article{KTT09,
  author = {King, R. C. and Tollu, C. and Toumazet, F.},
  title = {Factorisation of {Littlewood--Richardson} coefficients},
  journal = {Journal of Combinatorial Theory, Series A},
  volume = {116},
  year = {2009},
  pages = {314--333},
  doi = {10.1016/j.jcta.2008.06.005},
  note = {Theorem 1.4. \url{https://doi.org/10.1016/j.jcta.2008.06.005}}
}

@article{Roth,
  author = {Roth, Mike},
  title = {Reduction rules for {Littlewood--Richardson} coefficients},
  journal = {International Mathematics Research Notices},
  volume = {2011},
  number = {18},
  year = {2011},
  pages = {4105--4134},
  eprint = {1004.5133},
  archivePrefix = {arXiv},
  note = {Reduction Theorem (3.1.1) in the cited preprint. \url{https://arxiv.org/abs/1004.5133}}
}

@article{CJMsecond,
  author = {Cho, S. and Jung, E.-K. and Moon, D.},
  title = {A bijective proof of the second reduction formula for {Littlewood--Richardson} coefficients},
  journal = {Bulletin of the Korean Mathematical Society},
  volume = {45},
  year = {2008},
  pages = {485--494},
  doi = {10.4134/BKMS.2008.45.3.485},
  note = {Theorem 2.6. \url{https://doi.org/10.4134/BKMS.2008.45.3.485}}
}

@article{CJMfpsac,
  author = {Cho, S. and Jung, E.-K. and Moon, D.},
  title = {Reduction formulae from the factorization theorem of {Littlewood--Richardson} polynomials by {King, Tollu and Toumazet}},
  journal = {Discrete Mathematics and Theoretical Computer Science Proceedings},
  volume = {AJ},
  year = {2008},
  pages = {483--494},
  note = {FPSAC 2008, Definition 1.2 and Theorems 1.3 and 3.1. \url{https://dmtcs.episciences.org/3592/pdf}}
}

@manual{Normaliz,
  author = {{The Normaliz project}},
  title = {{Normaliz}},
  edition = {Version 3.11.1},
  note = {Manual supplied with the executable; sections on inhomogeneous input, Ehrhart series and Hilbert series. \url{https://www.normaliz.uni-osnabrueck.de/}}
}

@misc{Evidence,
  author = {Ghodsi, Maseeh},
  title = {Stretched {Littlewood--Richardson} coefficients: complete finite-box proof and computational evidence},
  year = {2026},
  howpublished = {Version 1.0.2, public source and data repository},
  note = {Snapshot 9ba79f584fe5e4d2c59fe5ce92507969152df555. \url{https://github.com/chesshippo/stretched-lr-coefficients/tree/9ba79f584fe5e4d2c59fe5ce92507969152df555}}
}
\endgroup
\end{document}